\numberwithin{equation}{section}
\newcommand{\R}{\mathbb{R}}
\newcommand{\N}{\mathbb{N}}
\newcommand{\F}{\mathcal{F}}
\newcommand{\E}{\mathbb{E}}
\renewcommand{\P}{\mathbb{P}}
\renewcommand{\vec}[1]{\boldsymbol{#1}}
\theoremstyle{plain}
\newtheorem{anytheorem}{Theorem}[section]
\newtheorem{theorem}[anytheorem]{Theorem}
\newtheorem{lemma}[anytheorem]{Lemma}
\newtheorem{corollary}[anytheorem]{Corollary}
\theoremstyle{definition}
\newtheorem{definition}[anytheorem]{Definition}
\newtheorem{remark}[anytheorem]{Remark}
\newtheorem{example}[anytheorem]{Example}
\newtheorem*{assumptionB}{Assumption B}
\newtheorem*{assumptionAC}{Assumption AC}
\begin{document}
\titlerunning{Nonlinear stochastic evolution equations of second order with damping}
\title{Nonlinear stochastic evolution equations of\newline second order with damping}
%%%%%%%%%%
\author{
Etienne Emmrich \and David \v{S}i\v{s}ka
\thanks{This work has been partially supported by the Collaborative Research Center 910, which is funded by the German Science Foundation.}
}
\authorrunning{E.~Emmrich, D.~\v{S}i\v{s}ka}
\institute{Etienne Emmrich \at Technische Universit{\"a}t Berlin, Institut f\"ur Mathematik \\ Stra{\ss}e des 17.\ Juni 136, 10623 Berlin, Germany \\ \email{emmrich@math.tu-berlin.de} \and
David \v{S}i\v{s}ka \at University of Edinburgh,
School of Mathematics, James Clerk Maxwell Building,\\
King's Buildings, Peter Guthrie Tait Road, Edinburgh EH9 3FD, United Kingdom \\
\email{d.siska@ed.ac.uk}}
%%%%%%%%%%
%%%%%%%%%%
\date{28th September 2016}
%%%%%%%%%%
\maketitle
%%%%%%%%%%
%%%%%%%%%%
\begin{abstract}
Convergence of a full discretization of a second order stochastic evolution equation with nonlinear damping is shown and thus existence of a solution is established. The discretization scheme combines an implicit time stepping scheme with an internal approximation. Uniqueness is proved as well.
\end{abstract}
%%%%%%%%%%
%%%%%%%%%%
\keywords{Stochastic evolution equation of second order,
Monotone operator,
Full discretization,
Convergence,
Existence,
Uniqueness}
\subclass{60H15, 47J35, 60H35, 65M12}
%%%%%%%%%%
%%%%%%%%%%
\section{Introduction}
In this article, a second order evolution equation with additive and multiplicative ``noise'' is considered.
Such equations were first studied by Pardoux~\cite{pardoux:thesis}.
The corresponding initial value problem may be written as
\begin{equation}
\label{eq:0a}
\ddot{u} + A\dot{u} + Bu = f + C(u,\dot{u}) \dot{W} \text{ in } (0,T), \ \dot{u}(0) = v_0, \ u(0) = u_0,
\end{equation}
where $\dot{W}$ is the ``noise'' and $T>0$ is given.
A variety of phenomena in physical sciences and engineering can be modelled using equations of the form~\eqref{eq:0a}.
If $K$ is the integral operator with $(Kw)(t) := \int_0^t w(s)ds$ for some function $w$ then the above problem is (with $\dot{u} = v$)
formally equivalent to
\begin{equation}
\label{eq:0b}
\dot{v} + Av + B\left(u_0 + Kv\right) = f + C\left(u_0 + Kv,v\right)\dot{W} \text{ in } (0,T), \ v(0) = v_0 .
\end{equation}

To give a more precise meaning to the above problem, let $(H, (\cdot,\cdot), |\cdot|)$ be a real Hilbert space identified with its dual $H^*$ and let $(V_A,\|\cdot\|_{V_A})$ and $(V_B,\|\cdot\|_{V_B})$ be real, reflexive,  separable Banach spaces that are densely and continuously embedded in $H$.
The main result will require, in addition, that $V_A$ is densely and continuously embedded in $V_B$ and so
\begin{equation*}
V_A \hookrightarrow V_B \hookrightarrow H = H^* \hookrightarrow V_B^* \hookrightarrow V_A^*
\end{equation*}
with $\hookrightarrow$ denoting dense and continuous embeddings.
We will use $\langle \cdot, \cdot \rangle$ to denote the duality pairing
between elements of some Banach space and its dual.
Moreover, let $(\Omega, \F, (\F_t)_{t\in [0,T]},\P)$
be a stochastic basis and
let $W = (W(t))_{t\in [0,T]}$ be an 
infinite dimensional Wiener process
adapted to the filtration $(\F_t)_{t\in [0,T]}$ and such that for any
$t, h\geq0$ the increment $W(t+h)-W(t)$ is independent of $\F_t$.

The exact assumptions will be stated in Section~\ref{sec:2}.
For now it suffices to say that
$B:V_B\times \Omega \to V_B^*$ is a linear, bounded, symmetric and strongly positive operator.
The operator $A:V_A\times \Omega \to V_A^*$ and, for
$j\in \N$, the operators $C_j: V_B \times V_A \times \Omega \to H$ are nonlinear, jointly satisfying appropriate coercivity and monotonicity-like conditions.
Furthermore, we assume that $A$ is hemicontinuous and satisfies a growth condition.
We write 
$C = (C_j)_{j \in \N}$ and assume that $C$ maps $V_B \times V_A \times \Omega$ 
into $l^2(H)$.
We consider the stochastic evolution equation
\begin{equation}
\begin{split}
\label{eq:1}
& v(t) + \int_0^t \big[Av(s) + B\big(u_0 + (Kv)(s) \big)\big] ds\\
& = v_0 + \int_0^t f(s) ds + \int_0^t C\big( u_0 + (Kv)(s) , v(s)\big) dW(s)
\end{split}
\end{equation}
for $t \in [0,T]$, where $u_0$ and $v_0$ are given $\F_0$-measurable random variables that are $V_B$ and $H$-valued, respectively.
The $V_A^*$-valued process $f$ is adapted to $(\F_t)_{t\geq 0}$ and the stochastic integral is the It\^o integral with
\begin{equation*}
 \int_0^t C(u(s), v(s)) dW(s) =  \sum_{j=1}^{\infty} \int_0^t C_j (u(s),v(s)) dW_j(s).
\end{equation*}

Stochastic partial differential equations of second order in time are an active area of research.
Broadly speaking, difficulties arise from nonlinear operators, lack of damping, multiplicative noise and noise terms that are not continuous martingales as well as from regularity issues inherent to second order evolution equations.
Nonlinear operators are a particular issue if they are nonlinear in the ``highest order'' term rather than a nonlinear perturbation of a linear principal part.
We briefly point the reader to various papers exploring some of the above issues.

Peszat and Zabczyk~\cite{peszat:zabczyk:nonlinear:stochastic:wave} give necessary and sufficient conditions for the existence of solutions to a stochastic wave equation without damping, linear in the highest order term with nonlinear zero order term and nonlinear multiplicative noise.
Marinelli and Quer-Sardanyons~\cite{marinelli:existence} prove existence of solutions for a class of semilinear stochastic wave equations driven by an additive noise term given by a possibly discontinuous square integrable martingale.
Kim~\cite{kim:on:the:stochastic} proved existence and uniqueness of a solution to a semilinear stochastic wave equation with damping and additive noise.
Carmona and Nualart~\cite{carmona:nualart:random} investigate the smoothness properties of the solutions of one-dimensional wave equations with nonlinear random forcing.
Further work has been done regarding the smoothness of solutions, we refer the reader to Millet and Morien~\cite{millet:morien:on:a:stochastic:wave} as well as Millet and Sanz-Sol{\'e}~\cite{millet:sanz-sole:a:stochastic:wave} and the references therein.

In the deterministic case, second order evolution equations similar to (\ref{eq:0a}) have been investigated in the seminal paper of Lions and Strauss~\cite{Lions-Strauss}. This has been extended to the stochastic case by Pardoux~\cite{pardoux:thesis}. Indeed, Pardoux~\cite{pardoux:thesis} has shown existence of solutions via a Galerkin approximation and uniqueness to~\eqref{eq:1} under the assumption that the operators are deterministic and Lipschitz continuous on bounded subsets but allowing time-dependent operators.
Finally, we note that Pardoux~\cite{pardoux:thesis} also covers the case of
first-order-in-time stochastic evolution equations.
For first-order-in-time stochastic evolution equations, we also refer the reader to Krylov and Rozovskii~\cite{krylov:rozovskii:stochastic}.

Our aim is twofold: We wish to prove convergence of a fully discrete approximation of (\ref{eq:1}) including a time discretization. As far as the authors are aware, this paper is the first to prove convergence of a full discretization of stochastic evolution equations of second order
with a damping that has nonlinear principal part and a rather general multiplicative noise.
Moreover, we wish to extend Pardoux's result to random operators removing the Lipschitz-type condition. 
See Example~\ref{example:no_lip_on_bdd_subsets} for a situation 
where the assumption of Lipschitz continuity on bounded subsets does not hold
but the assumptions of this paper are satisfied.
We show existence of solutions to~\eqref{eq:1} by proving appropriate convergence of solutions to a full discretization. Unfortunately, the randomness of the operators finally requires the assumption that $V_A$ is continuously embedded in $V_B$ (see also Remark~\ref{rem:VA}), which is not the case with Pardoux~\cite{pardoux:thesis}. The reason is the use of the standard It\^o formula for the square of the norm,
see, e.g., Krylov and Rozovski{\u\i}~\cite{krylov:rozovskii:stochastic},
Gy\"ongy and Krylov~\cite{gyongy:krylov:ito:formula}
or Pr\'ev\^ot and R\"ockner~\cite{prevot:rockner:concise}.
It is left for future work whether the It\^{o} formula can be adapted to the general case where neither is $V_A$ embedded into $V_B$ nor is $V_B$ embedded into $V_A$. This is a rather delicate problem already for the integration by parts in the deterministic case (see again Lions and Strauss~\cite{Lions-Strauss} as well as Emmrich and Thalhammer \cite{emmrich:doubly}).
Finally, we will show that two solutions are indistinguishable.

Let us now describe the full discretization. A Galerkin scheme $(V_m)_{m\in \N}$ for $V_A$ will provide the internal approximation. For the temporal discretization, we choose an explicit scheme for approximating the stochastic integral but otherwise we use an implicit scheme.
Finally, we have to truncate the infinite
dimensional noise term.

Fix $m, r, N \in \N$.
Let $\tau := T/N$.
For $n=0,1,\ldots, N$, let $t_n := n\tau$.
Define $C^r := (C^r_j)_{j\in \N}$ with $C^r_j:= C_j$ for $j=1,\ldots,r$, $C^r_j = 0$ for $j>r$
and let
\begin{equation*}
\Delta W^n := \left\{
\begin{array}{ll}
W(t_n) - W(t_{n-1}) &\,\, \text{for } \,\,n=2,\ldots,N,\\
0,& \,\,  \text{for } \,\, n = 1.
\end{array}
\right.
\end{equation*}
For $g\in l^2(H)$, we define $g W(t) := \sum_{j\in \N} g_k W_k(t)$.
Clearly, $\tau$, $t_n$ and $\Delta W^n$ all depend on $N$.
This dependence will always be omitted in our notation.
The reason for taking $\Delta W^1 = 0$ will become clear during the proof of the a priori estimate for the discrete problem.
It allows one to assume that $v_0$ is an $H$-valued $\F_0$-measurable random variable (rather than a $V_A$-valued one).
This is consistent with the case of deterministic second-order-in-time evolution equations, see Lions and Strauss~\cite{Lions-Strauss}, and the stochastic second-order-in-time evolution equations, see Pardoux~\cite{pardoux:thesis}.

We now define $(u^n)_{n=0}^N$ and $(v^n)_{n=0}^N$ which will be approximations of $u$ and $v$, respectively, such that $u(t_n) \approx u^n$ and $v(t_n) \approx v^n$.
Assume that the $\F_0$-measurable random variables $u^0$ and $v^0$ take values in $V_m$ and are some given approximations of the initial values $u_0$ and $v_0$, respectively.
Let $(f^n)_{n=1}^N$ be an approximation of $f$ with $f^n$ being an $\F_{t_n}$-measurable $V_A^*$-valued random variable for $n=1,\ldots,N$.

Now we can fully discretize~\eqref{eq:1}.
We do this by approximating the integrands in~\eqref{eq:1} by piecewise constant processes on the time grid $(t_n)_{n=0}^N$.
Effectively, the value on the right-hand side of each interval is taken when approximating the non-stochastic integrals and the value on the left-hand side of each interval is taken when approximating the It\^o stochastic integral.
We define $(v^n)_{n=1}^N $ with $v^n$ being $V_m$-valued for $n=1,\ldots,N$ as the solution of
\begin{equation}
\begin{split}
\label{eq:2b}
& (v^n,\varphi)  + \tau\sum_{k=1}^n\bigg\langle A v^k +  B\bigg(u^0 + \tau \sum_{j=1}^k v^j\bigg),\varphi \bigg\rangle \\
& = (v^0, \varphi) + \tau \sum_{k=1}^n \langle f^k, \varphi\rangle + \sum_{k=1}^n \bigg( C^r\bigg(u^0 + \tau \sum_{j=1}^{k-1} v^j,v^{k-1}\bigg) \Delta W^k,\varphi \bigg)
\end{split}
\end{equation}
for all $\varphi \in V_m$ and $n= 1, \ldots , N$.
We can immediately see that~\eqref{eq:2b} corresponds to
\begin{equation}
\label{eq:2bb}
\begin{split}
& \bigg(\frac{v^n - v^{n-1}}{\tau}, \varphi\bigg) + \bigg\langle A v^n +  B\bigg(u^0 + \tau \sum_{k=1}^n v^k\bigg),\varphi \bigg\rangle\\
& = \langle f^n, \varphi \rangle + \bigg(C^r\bigg(u^0 + \tau \sum_{k=1}^{n-1} v^k,v^{n-1}\bigg)\frac{\Delta W^n}{\tau}, \varphi\bigg)
\end{split}
\end{equation}
for all $\varphi \in V_m$ and for $n=1,\ldots,N$.
This is exactly the numerical scheme one could obtain directly from~\eqref{eq:0b}.
In the case $C=0$ (i.e., the non-stochastic case) this would be an implicit Euler scheme in the ``velocity'', with the integral operator replaced by a simple quadrature.
With $u^n := u^0 + \tau\sum_{k=1}^n v^k$,
we further see that~\eqref{eq:2b} is also equivalent to
\begin{equation*}
\begin{split}
&\bigg(\frac{u^n - 2u^{n-1} + u^{n-2}}{\tau^2},\varphi\bigg) + \bigg\langle A\bigg(\frac{u^n-u^{n-1}}{\tau}\bigg) + B u^n,\varphi \bigg\rangle\\
& =  \langle f^n, \varphi \rangle 
+ \bigg(C^r\bigg(u^{n-1},\frac{u^{n-1}-u^{n-2}}{\tau}\bigg)\frac{\Delta W^n}{\tau}, \varphi \bigg)
\end{split}
\end{equation*}
for all $\varphi \in V_m$ and for $n=1,\ldots,N$, where $u^0$ and $u^{-1}:= u^0 - \tau v^0$ are given.
One could obtain this scheme directly from~\eqref{eq:0a}.

Numerical schemes for deterministic evolution equations of the above type have been investigated mostly for the particular case that $V_A = V_B$.  Emmrich and Thalhammer~\cite{emmrich:thalhammer:convergence} have proved weak convergence of time discretizations under the assumption that $V_A$ is continuously embedded in $V_B$. In Emmrich and Thalhammer \cite{emmrich:doubly},
weak convergence of fully discrete approximations is proved in the case when strongly continuous perturbations are added to the nonlinear principal part $A$ and the linear principal part $B$ even if $V_A$ is not embedded in $V_B$. This also generalizes the existence result of Lions and Strauss~\cite{Lions-Strauss}.
The convergence results have subsequently been extended in Emmrich and \v{S}i\v{s}ka~\cite{emmrich:siska:full}.
The situation for linear principal part $A$ but nonlinear, non-monotone $B$ requires a different analysis and is studied in Emmrich and \v{S}i\v{s}ka~\cite{emmrich:evolution}.

Numerical solutions of second-order-in-time stochastic partial differential equations have also been studied but for semilinear problems.
Kov\'acs, Saedpanach and Larsson~\cite{kovacs:saedpanah:larsson:finite}
considered a finite element approximation of the linear stochastic wave
equation with additive noise using semigroup theory.
Hausenblas~\cite{hausenblas:weak} demonstrated weak convergence (weak in the probabilistic sense) of numerical approximations to semilinear stochastic wave equations with additive noise.
De Naurois, Jentzen and Welti prove weak convergence rates for spatial spectral approximations
for an equation with multiplicative noise~\cite{deNaurois:jentzen:welti}.
For results on full-discretization, see also Anton, Cohen, Larsson and Wang~\cite{anton:cohen:larsson:wang}.
Semigroup theory is also used by Tessitore and Zabczyk~\cite{tessitore:zabczyk:wong} to prove weak convergence of the laws for Wong--Zakai approximations to semilinear strongly damped evolution equations of second order with multiplicative noise acting on the zero-order-in-time term.
Error estimates and estimates of the rate of convergence
can be found, e.g., in Walsh~\cite{walsh:on:numerical} and
Quer-Sardanyons and Sanz-Sol\'e~\cite{quer-sardanyons:sanz-sole:space} for particular examples governed by a linear principal part.

This paper is organized as follows.
Section~\ref{sec:2} contains all the assumptions and the statement of the main results of the paper.
In Section~\ref{sec:fulldisc}, we study the full discretization, prove that the fully discrete problem has a unique solution and establish a priori estimates.
We use the a priori estimates and compactness arguments in Section~\ref{sec:weaklimits} to obtain a stochastic process that is the weak limit of piecewise-constant-in-time prolongations of the solutions to the discrete problem.
In Section~\ref{sec:identlims}, it is shown that the weak limits satisfy the stochastic evolution equation. This finally proves convergence as well as existence of a solution. Uniqueness is then proved in Section~\ref{sec:uniq}. 
%%%%%%%%%%%%%%%%%%%%%%%%%%%%%%%%%%%%%%%%%%%%%%%%
%%%%%%%%%%%%%%%%%%%%%%%%%%%%%%%%%%%%%%%%%%%%%%%%
\section{Statement of assumptions and results}
\label{sec:2}
In this section, we state the precise assumptions on the operators, we define what is meant by a solution to~\eqref{eq:1} and we give the statement of the main result of this paper. Let us start with explaining the notation.

Throughout this paper, let $c > 0$ denote a generic constant that is independent of the discretization parameters. We set $\sum_{j=1}^0 z_j = 0$ for arbitrary $z_j$.
Recall that $T>0$ is given and that $(\Omega, \F, (\F_t)_{t\in [0,T]},\P)$
is a stochastic basis. By this, we mean that the probability space $(\Omega, \F, \P)$
is complete, $(\F_t)_{t\in [0,T]}$ is a filtration such that
any set of probability zero that is in $\F$ also
belongs to $\F_0$ and such that
$\F_s = \bigcap_{t>s} \F_t$  for all $s\in [0,T)$.
Moreover, $W = (W(t))_{t\in [0,T]}$ is an
infinite dimensional
Wiener process adapted to $(\F_t)_{t\in [0,T]}$ and such that for any $t , h\geq0$ the increment $W(t+h)-W(t)$ is independent of $\F_t$.

For a Banach space $(X,\|\cdot\|_X)$, we denote its dual by $(X^*,\|\cdot\|_{X^*})$ and we use $\langle g, w\rangle$ to denote the duality pairing between $g\in X^*$ and $w\in X$.
We will use the symbol $\rightharpoonup$ to denote weak convergence.
Let $p\in[2,\infty)$ be given and let $q  = \frac{p}{p-1}$ be the conjugate exponent of $p$.
For a separable and reflexive Banach space $X$, we denote
by $L^p(\Omega; X)$ and $L^p((0,T)\times \Omega; X)$ the standard Bochner--Lebesgue spaces (with respect to $\mathcal{F}$) and refer to Diestel and Uhl~\cite{diestel:vector} for more details. In particular, we recall that the concepts of strong measurability, weak measurability and measurability coincide since $X$ is separable (see also Amann and Escher~\cite{amann:escher}).
The norms are given by
\begin{equation*}
\|w\|_{L^p(\Omega;X)} := \left(\E\|w\|_X^p \right)^{1/p}
\text{ and }
\|w\|_{L^p((0,T)\times \Omega;X)} := \left(\E\int_0^T \|w(t)\|_X^p dt \right)^{1/p} .
\end{equation*}
The duals of $L^p(\Omega; X)$ and
$L^p((0,T)\times \Omega; X)$ are identified with $L^q(\Omega; X^*)$
and $L^q((0,T)\times \Omega; X^*)$, respectively.
Let $\mathcal{L}^p(X)$ be the linear subspace of $L^p((0,T)\times \Omega; X)$ consisting of equivalence classes of $X$-valued
stochastic processes
that are measurable with respect to the progressive 
$\sigma$-algebra. Note that $\mathcal{L}^p(X)$ is closed.

We say that an operator $D:X\times \Omega \to X^*$ is weakly
measurable with respect to some $\sigma$-algebra $\mathcal{G} \subseteq \mathcal{F}$ if the real-valued random variable $\langle Dw, z \rangle$ is $\mathcal{G}$-measurable for any $w$ and $z$ in $X$,
i.e., $Dw:\Omega \to X^*$ is weakly* $\mathcal{G}$-measurable for all $w \in X$.

Recall that $(H, (\cdot,\cdot), |\cdot|)$ is a real, separable Hilbert space, identified with its dual.
By
$h \in l^2(H)$, we mean that $h=(h_j)_{j\in \N}$ with $h_j \in H$ for $j\in \N$ and 
$\sum_{j\in \N} |h_j|^2 < \infty$.
We define the inner product in $l^2(H)$ by $(g,h)_{l^2(H)} := \sum_{j\in\N} (g_j,h_j)$,
where $g , h \in l^2(H)$.
This induces a norm on $l^2(H)$ by $|h|_{l^2(H)} = (h,h)_{l^2(H)}^{1/2}$.
Further recall that $(V_A,\|\cdot\|_{V_A})$ and $(V_B,\|\cdot\|_{V_B})$ are real, reflexive and separable Banach spaces that are densely and continuously embedded in $H$
and that the main result will require, in addition, that $V_A$ is densely and continuously embedded in $V_B$ and so
\begin{equation}\label{embedding}
V_A \hookrightarrow V_B \hookrightarrow H = H^* \hookrightarrow V_B^* \hookrightarrow V_A^*
\end{equation}
with $\hookrightarrow$ denoting dense and continuous embeddings.
Our notation does not distinguish whether the duality pairing 
$\langle \cdot, \cdot \rangle$ is the duality pairing between $V_A$ and $V_A^*$ or
$V_B$ and $V_B^*$ since in situations when both would be well defined they coincide due to~\eqref{embedding}.

Finally, we need a Galerkin scheme for $V_A$ which we denote by $(V_m)_{m\in \N}$. That is, we assume that for all $m\in \N$ we have $V_m \subseteq V_{m+1} \subset V_A$ and that $\bigcup_{m\in \N} V_m$ is dense in $V_A$.
We assume further, without loss of generality, that the dimension of $V_m$ is $m$.

\begin{assumptionB}
Let $B:V_B\times \Omega \to V_B^*$ be
weakly $\mathcal{F}_0$-measurable.
Assume moreover that $B$ is, almost surely, {\em linear}, {\em symmetric} and let there be $\mu_B > 0$ and $c_B > 0$ such that, almost surely,
\begin{equation*}
\langle Bw ,w \rangle \geq \mu_B\|w\|^2_{V_B} \text{ and } \|Bw\|_{V_B^*} \leq c_B\|w\|_{V_B} \quad \forall w\in V_B.
\end{equation*}
This means that $B$ is, almost surely, {\em strongly positive} and {\em bounded}.
\end{assumptionB}
Note that with this assumption we can define, for $\P$-almost all $\omega \in \Omega$, an inner product on $V_B$ by $(w,z)_B := \langle Bw,z \rangle$ for any $w,z \in V_B$.
We will denote the norm associated with the inner product by $|\cdot|_B := (\cdot, \cdot)_B^{1/2}$. This norm is equivalent to $\|\cdot\|_{V_B}$.

\begin{assumptionAC}
The operators $A:V_A\times\Omega \to V_A^*$ and 
$C:V_B\times V_A\times \Omega \to l^2(H)$
are weakly $\F_0$-measurable.
Moreover, we assume that $A$, is almost surely, {\em hemicontinuous},
i.e., there is $\Omega_0 \in \F_0$ with $\P(\Omega_0) = 0$ and for every $\omega \in \Omega\setminus \Omega_0$ the function $\epsilon\mapsto \langle A(w+\epsilon z,\omega), v \rangle : [0,1] \to \mathbb{R}$ is continuous
for any $v,w,z \in V_A$.

There is $c_A >0 $ such that, almost surely, the {\em growth condition}
\begin{equation*}
  \|Aw\|_{V_A^*} \leq c_A(1+\|w\|_{V_A})^{p-1} \quad \forall w \in V_A
\end{equation*}
is satisfied.

There are $\mu_A > 0$, $\lambda_A \geq 0$, $\lambda_B \geq 0$ and $\kappa\geq 0$ such that, almost surely, the operators $A$ and $C$ satisfy the {\em monotonicity-like} condition
\begin{equation}
\label{eq:orig_monotonicity}
\langle Aw - Az, w-z \rangle + \lambda_A |w-z|^2 \geq \frac{1}{2}|C(u,w) - C(v,z)|_{l^2(H)}^2 - \lambda_B|u-v|_B^2
\end{equation}
for any $w,z \in V_A$ and $u,v \in V_B$
and the {\em coercivity-like} condition
\begin{equation}
\label{eq:orig_coercivity}
\langle Aw,w \rangle + \lambda_A|w|^2\geq \mu_A \|w\|_{V_A}^p + \frac{1}{2}|C(u,w)|_{l^2(H)}^2 - \lambda_B|u|_B^2 - \kappa
\end{equation}
for any $w \in V_A$ and $u \in V_B$.
\end{assumptionAC}

The almost sure hemicontinuity of $A:V_A \times \Omega \to V_A^*$ together with the almost sure monotonicity of $A + \lambda_A I:V_A \times \Omega \to V_A^*$ (see~\eqref{eq:orig_monotonicity}) imply
that $A$ is in fact, almost surely, demicontinuous (see also Krylov and Rozovskii~\cite{krylov:rozovskii:stochastic}).

The growth condition and coercivity from Assumption~{\em AC} imply that for any $u\in V_B$ and $w\in V_A$,
\begin{equation}
\label{eq:Cbdd}
|C(u,w)|_{l^2(H)}^2 \leq c(1+|u|_B^2 + |w|^2 + \|w\|_{V_A}^p).
\end{equation}
The monotonicity-like condition implies that $C$ is Lipschitz continuous in its first argument uniformly with respect to its second argument.
Indeed for all $w\in V_A$ and all $u,v \in V_B$ we get
\begin{equation*}
|C(u,w) - C(v,w)|_{l^2(H)} \leq \sqrt{2\lambda_B}|u-v|_B.
\end{equation*}
If the coercivity and monotonicity-like conditions are satisfied then
we obtain with
$\lambda := 2\max(\lambda_A, \lambda_B, \kappa)$
\begin{equation}
\label{eq:mod_monotonicity}
2\langle Aw - Az, w-z \rangle + \lambda|w-z|^2 + \lambda|u-v|_B^2 \geq |C(u,w) - C(v,z)|_{l^2(H)}^2
\end{equation}
and
\begin{equation}
\label{eq:mod_coercivity}
2\langle Aw,w \rangle + \lambda(|w|^2 + |u|_B^2 + 1) \geq 2\mu_A \|w\|_{V_A}^p + |C(u,w)|_{l^2(H)}^2.
\end{equation}

In many applications, the operators $A$ and $C$ would arise separately from various modelling considerations.
In such a situation, it may be useful to see under what assumptions on $A$ and $C$, stated independently, would (\ref{eq:orig_monotonicity}) and (\ref{eq:orig_coercivity}) hold.
To that end,  assume that there are $\mu_A > 0$ and $\lambda_1, \lambda_2 \ge 0$ such that, almost surely, for all $w,z \in V_A$
\begin{equation}
\label{eq:mon_and_coer_of_A_only}
\langle Aw - Az , w -z \rangle + \lambda_1 |w-z|^2 \geq 0 \,\,\textrm{ and }\,\, \langle Aw, w \rangle + \lambda_2 |w|^2 \geq \mu_A \|w\|_{V_A}^p.
\end{equation}
Assume further that there are $\lambda_3, \lambda_4 \geq 0$  such that, almost surely, for all $u,v \in V_B$ and $w,z \in V_A$
\begin{equation*}
|C(u,w) - C(v,z)|_{l^2(H)}^2 \leq  \lambda_3|u-v|_B^2 + \lambda_4|w-z|^2.
\end{equation*}
With $v=z=0$ and $\kappa = |C(0,0)|_{l^2(H)}^2$, we obtain
\begin{equation*}
|C(u,w)|_{l^2(H)}^2 \leq 2\big( \lambda_3 |u|_B^2 + \lambda_4 |w|^2 + \kappa \big).
\end{equation*}
Then \eqref{eq:orig_monotonicity} and \eqref{eq:orig_coercivity} follow with a suitable choice of the constants.

Examples of operators satisfying the above assumptions and the corresponding stochastic partial differential equations can be found in Pardoux~\cite[Part III, Ch. 3]{pardoux:thesis}.
Let us present an example where the condition on Lipschitz continuity on bounded sets 
as required by Pardoux is not satisfied but the assumptions of this paper hold.

\begin{example}
\label{example:no_lip_on_bdd_subsets}
We consider a bounded domain $\mathcal{D}$ in $\R^d$
with smooth boundary and take 
$V_A=V_B=H^1_0(\mathcal{D})$, the standard Sobolev space, and $H=L^2(\mathcal{D})$.
Following Emmrich~\cite{emmrich:time}, we consider 
$\rho:\R^d\to\R^d$ given by
\begin{equation*}
\rho(z) = \left\{ 
\begin{array}{ll}
0 & \text{if} \quad |z| = 0,\\
|z|^{-1/2}z & \text{if} \quad |z| \in (0,1),\\
z & \textrm{otherwise}.
\end{array}
\right.	
\end{equation*}
It is then easy to check that $A:V_A\to V_A^*$ 
given by
\begin{equation*}
\langle A v, w \rangle = \int_\mathcal{D} \rho(\nabla v)\cdot \nabla w \, dx	
\end{equation*}
satisfies the hemicontinuity and growth condition of Assumption AC 
as well as the monotonicity and coercivity 
condition~\eqref{eq:mon_and_coer_of_A_only}. 
Moreover it is possible to show that this operator $A$ does
not satisfy the assumption of Lipschitz continuity
on bounded subsets of Pardoux~\cite{pardoux:thesis}.
\end{example}

We say that $\tilde{z}$ is a modification of $z\in \mathcal{L}^\gamma(X)$
($\gamma \in [1,\infty)$) if $z(t,\omega) = \tilde{z}(t,\omega)$ for
$(dt \times d\P)$-almost all $(t,\omega)$.
If $X\hookrightarrow H$ then we say that $\tilde{z}$ is an $H$-valued
continuous modification of $z\in \mathcal{L}^\gamma(X)$
if $t \mapsto \tilde{z}(t,\omega) : [0,T] \to H$  is continuous for almost all $\omega \in \Omega$ and
$\tilde{z}$ is a modification of $z$.

We will use the following notation for stochastic integrals:
Given $x\in \mathcal{L}^2(H)$ and
$y\in \mathcal{L}^2(l^2(H))$, we write
\[
\int_0^t (x(s), y(s)dW(s)) := 
\sum_{j\in\N}\int_0^t (x(s), y_j(s))dW_j(s).
\]

\begin{definition}[Solution]
\label{def:soln}
Let $u_0 \in L^2(\Omega; V_B)$ and $v_0 \in L^2(\Omega; H)$ be $\F_0$-measurable and let $f\in \mathcal{L}^q\left({V_A}^*\right)$.
Let there be $v\in \mathcal{L}^p(V_A)$ such that $u_0 + Kv \in \mathcal{L}^2(V_B)$
and moreover let there be an $H$-valued continuous modification $\tilde{v}$ of $v$.
Then $v$ is said to be a {\em solution} to~\eqref{eq:1} if
$\P$-almost everywhere,
for all $t\in [0,T]$ and for all $z \in V_A$
\begin{equation*}
\begin{split}
&  ( \tilde{v}(t), z )  + \int_0^t \left\langle Av(s) + B\left(u_0 + (Kv)(s)\right),z\right\rangle ds \\
& = (v_0,z) + \int_0^t \langle f(s),z\rangle ds + \int_0^t \big(z, C(u_0 + (Kv)(s),v(s)) dW(s) \big).
\end{split}
\end{equation*}
\end{definition}

We will typically not distinguish between $\tilde{v}$ and $v$, denoting both by $v$, to simplify notation.
The following result on the uniqueness of solutions to~\eqref{eq:1}
will be proved in Section~\ref{sec:uniq}.

\begin{theorem}[Uniqueness of solution]
\label{lemma:uniq}
Let Assumptions~{\em AC} and {\em B} and let (\ref{embedding}) hold.
Let $v_1$ and $v_2$ be two solutions to~\eqref{eq:1}
in the sense of Definition~\ref{def:soln}.
Then
\begin{equation*}
\P\left(\max_{t \in [0,T]}|v_1(t) - v_2(t)| = 0\right) = 1,	
\end{equation*}
i.e., $v_1$ and $v_2$ are indistinguishable.
Moreover, if we let
\begin{equation*}
u_1 = u_0 + Kv_1 \quad \textrm{ and } \quad u_2 = u_0 +Kv_2
\end{equation*}
then
\begin{equation*}
\P\left(\max_{t \in [0,T]}\|u_1(t) - u_2(t)\|_{V_B} = 0\right) = 1,	
\end{equation*}
i.e., $u_1$ and $u_2$ are also indistinguishable.
\end{theorem}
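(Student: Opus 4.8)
The plan is to control the difference of the two solutions through the It\^o formula for the square of the $H$-norm, combined with the monotonicity-like bound \eqref{eq:mod_monotonicity} and Gronwall's lemma. Set $v := v_1 - v_2$ and $u := u_1 - u_2 = K(v_1 - v_2)$, and write $G := C(u_1, v_1) - C(u_2, v_2)$. Since both solutions share the initial data, $v(0) = 0$ and $u(0) = 0$; moreover $u$ is absolutely continuous with $\dot u = v$. Subtracting the two identities in Definition~\ref{def:soln} and using that $B$ is linear, one finds that $\P$-almost surely, for all $t \in [0,T]$ and $z \in V_A$,
\begin{equation*}
(v(t), z) = -\int_0^t \langle Av_1(s) - Av_2(s) + B u(s), z \rangle\, ds + \int_0^t (z, G(s)\, dW(s)).
\end{equation*}

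First I would verify that this identity places $v$ in the framework of the standard It\^o formula for the Gelfand triple $V_A \hookrightarrow H \hookrightarrow V_A^*$. The growth condition together with $(p-1)q = p$ and $v_i \in \mathcal{L}^p(V_A)$ gives $Av_i \in \mathcal{L}^q(V_A^*)$; the operator $B$ is bounded and, crucially, the embedding \eqref{embedding} yields $B u \in \mathcal{L}^2(V_B^*) \subseteq \mathcal{L}^q(V_A^*)$ (here $q \le 2$ and the measure is finite); and \eqref{eq:Cbdd} shows $G \in \mathcal{L}^2(l^2(H))$. The It\^o formula then gives, $\P$-almost surely,
\begin{equation*}
|v(t)|^2 = -2\int_0^t \langle Av_1 - Av_2, v \rangle\, ds - 2\int_0^t \langle Bu, v\rangle\, ds + \int_0^t |G|_{l^2(H)}^2\, ds + 2\int_0^t (v, G\, dW).
\end{equation*}
Because $B$ is symmetric, linear and independent of $t$, and $\dot u = v$, the deterministic chain rule gives $\frac{d}{ds}|u(s)|_B^2 = 2\langle Bu(s), v(s)\rangle$, so that $2\int_0^t \langle Bu, v\rangle\, ds = |u(t)|_B^2$.

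Next, writing $\phi(t) := |v(t)|^2 + |u(t)|_B^2$ and moving the $B$-term to the left, I would apply \eqref{eq:mod_monotonicity} with $w = v_1(s)$, $z = v_2(s)$ and $V_B$-arguments $u_1(s), u_2(s)$ to obtain $|G|_{l^2(H)}^2 - 2\langle Av_1 - Av_2, v\rangle \le \lambda \phi$. This yields
\begin{equation*}
\phi(t) \le \lambda \int_0^t \phi(s)\, ds + 2\int_0^t (v, G\, dW).
\end{equation*}
Since the stochastic integral is only a local martingale, I would localize with the stopping times $\tau_n := \inf\{ t \in [0,T] : \phi(t) \ge n \} \wedge T$, which increase to $T$ almost surely because $\phi$ admits a continuous version (from the continuous $H$-valued modification of $v$ and the continuous $V_B$-valued representative of $u = u_0 + Kv$). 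On $[0, \tau_n]$ one has $|v|^2 \le n$, and $\E\int_0^T |G|_{l^2(H)}^2\, ds < \infty$ follows from \eqref{eq:Cbdd} and the integrability built into Definition~\ref{def:soln}, so the stopped stochastic integral is a genuine martingale. Taking expectations gives $\E\phi(t\wedge\tau_n) \le \lambda \int_0^t \E\phi(s\wedge\tau_n)\, ds$, whence $\E\phi(t\wedge\tau_n) = 0$ by Gronwall's lemma; letting $n \to \infty$ and using Fatou's lemma and continuity yields $\E\phi(t) = 0$ for every $t \in [0,T]$.

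Finally, $\E\phi(t) = 0$ means $|v(t)| = 0$ and $|u(t)|_B = 0$ almost surely for each fixed $t$. As both $t \mapsto |v(t)|$ and $t \mapsto |u(t)|_B$ are almost surely continuous, evaluating on a countable dense subset of $[0,T]$ and invoking continuity gives $\P(\max_{t} |v(t)| = 0) = 1$ and $\P(\max_t |u(t)|_B = 0) = 1$; the equivalence of $|\cdot|_B$ and $\|\cdot\|_{V_B}$ then gives the statement for $u$. I expect the main obstacle to be the rigorous application of the It\^o formula: verifying that the combined drift lies in $\mathcal{L}^q(V_A^*)$ --- which is precisely where the embedding $V_A \hookrightarrow V_B$ from \eqref{embedding} is needed so that $Bu \in V_B^* \hookrightarrow V_A^*$ --- together with the localization required to turn the stochastic integral into a true martingale before taking expectations.
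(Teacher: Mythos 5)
Your proposal is correct, and its skeleton coincides with the paper's proof: form the difference $v=v_1-v_2$, $u=u_1-u_2$, subtract the two equations, apply the It\^o formula for the square of the $H$-norm on the Gelfand triple (which is exactly where the embedding $V_A \hookrightarrow V_B$ is needed, so that $Bu \in V_B^* \hookrightarrow V_A^*$ and the drift lies in $\mathcal{L}^q(V_A^*)$), absorb the $B$-term via $2\int_0^t \langle Bu,v\rangle\,ds = |u(t)|_B^2$, and then invoke the monotonicity-like condition \eqref{eq:mod_monotonicity}. Where you diverge is the endgame. The paper multiplies by $e^{-\lambda t}$ (via the real-valued It\^o formula, in the spirit of Lemma~\ref{lemma:int_by_pts_cals}) to absorb the term $\lambda\big(|v|^2+|u|_B^2\big)$, arriving at $0 \le e^{-\lambda t}\big(|v(t)|^2+|u(t)|_B^2\big) \le m(t)$ with $m$ a continuous local martingale starting from $0$; since a non-negative continuous local martingale started at $0$ vanishes identically (supermartingale property), no stopping times, Gronwall lemma or explicit integrability checks are needed, and indistinguishability follows at once. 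You instead keep the inequality $\phi(t) \le \lambda\int_0^t \phi(s)\,ds + 2\int_0^t (v, G\,dW)$, localize with the hitting times $\tau_n$, take expectations, apply Gronwall and Fatou, and finish with continuity on a countable dense set. Both routes are valid; yours is the more standard and robust mechanism and makes the integrability of the stochastic integrand explicit, while the paper's sign-exploiting martingale argument is shorter and dispenses with localization altogether.
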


Consider a sequence $(m_\ell,r_\ell,N_\ell)_{\ell \in \N}$
such that $m_\ell\to \infty$, $r_\ell \to \infty$ and $N_\ell \to \infty$ as $\ell \to \infty$ and let $\tau_\ell = T/N_\ell$.
Let $(u^0_\ell)_{\ell \in \N}$ be a sequence of $\F_0$-measurable random variables with values in $V_{m_\ell}$ such that $u^0_\ell \in L^2(\Omega;V_B)$ and $u^0_\ell \to u_0$ in $L^2(\Omega;V_B)$ as $\ell \to \infty$. Moreover, let $(v^0_\ell)_{\ell \in \N}$ be a sequence
of $\F_0$-measurable random variables with values in $V_{m_\ell}$ such that $v^0_\ell \in L^2(\Omega;H)$ and $v^0_\ell \to v_0$ in $L^2(\Omega;H)$ as $\ell \to \infty$.
For $f\in \mathcal{L}^q\left({V_A}^*\right)$, we use the approximation
\begin{equation}\label{approx-fff}
f^n := \frac{1}{\tau_\ell} \int_{t_{n-1}}^{t_n} f(t)\,dt,\quad n = 1, \ldots, N_\ell\,,
\end{equation}
where we recall that $t_n = n\tau_\ell$
for $n=0,\ldots,N_\ell$.
Note that for readability we drop the dependence of $t_n$ and $f^n$ on $N_\ell$.

For each $(m_\ell, r_\ell, N_\ell)$, we take $(f^n)_{n=1}^{N_\ell}$ and the solution to the scheme~\eqref{eq:2b} and use this to define stochastic processes $f_\ell$, $v_\ell$ and $u_\ell$, which will be approximations of $f$, $v$ and $u$, as follows: for $n=1,\ldots,N_\ell$, let
\begin{equation}
\label{eq:ext1}
f_\ell(t) := f^n, \, v_\ell(t) := v^n, \, u_\ell(t) := u^n \, \textrm { if } \, t\in (t_{n-1}, t_n].
\end{equation}
We may set $f_\ell(0) = f^1$, $v_\ell(0) = v^1$, $u_\ell(0) = u^1$.
Note that $u^n$ and $v^n$ indeed depend on $m_\ell$ and $N_\ell$.

We see that even if $v^n$ and $u^n$ are $\F_{t_n}$-measurable for each $n=0,1,\ldots, N_\ell$ then the processes $v_\ell$ and $u_\ell$ are not $(\F_t)_{t\in [0,T]}$ adapted.
Thus we will not be able to directly use compactness-based arguments to get weak limits that are adapted.
To overcome this, we will also use the following approximations:
for $n=2,\ldots,N_\ell$, let
\begin{equation}
\label{eq:ext2}
v_\ell^-(t) := v^{n-1}, \, u_\ell^-(t) := u^{n-1}\, \textrm { if } \, t\in [t_{n-1}, t_n)
\end{equation}
and let $v_\ell^{-}(t) = 0$ and $u_\ell^{-}(t) = u^0$ if $t\in [0,\tau_\ell)$.
We may set $v_\ell^-(T) = v^{N_\ell}$, $u_\ell^-(T) = u^{N_\ell}$.

We note that $v_\ell(t_n) = v_\ell^-(t_n) = v^n$ and $u_\ell(t_n) = u_\ell^-(t_n) = u^n$ for $n=1,\ldots,N_\ell$.
If $v^n$ and $u^n$ are $\F_{t_n}$-measurable for each $n=0,1,\ldots, N$ then the processes $v_\ell^-$ and $u_\ell^-$ are $(\F_t)_{t\in [0,T]}$ adapted.
For $v_\ell^-$ (and $u_\ell^-$) we will then be able to obtain weak limits that are themselves adapted processes.
Later, we will show that the weak limits of $v_\ell^-$ and $v_\ell$ as well as of $u_\ell^-$ and $u_\ell$ coincide.

We now rewrite~\eqref{eq:2b} in an integral form.
To that end, define $\theta_\ell^+(0) :=0$ and $\theta_\ell^+(t) := t_n$ if $t\in (t_{n-1},t_n]$ and $n=1,\ldots,N_\ell$.
Then saying $(v^n)_{n=1}^N$ satisfies~\eqref{eq:2b} with $m=m_\ell$ and $\tau = \tau_\ell$ is equivalent to
\begin{equation}
\label{eq:scheme_continuous_formulation}
\begin{split}
& (v_\ell(t),\varphi) + \bigg\langle\int_0^{\theta_\ell^+(t)} (Av_\ell(s)+Bu_\ell(s)-f_\ell(s))ds,\varphi \bigg\rangle\\
& = (v^0_\ell,\varphi)
+ \bigg(\int_{\tau_\ell}^{\theta_\ell^+(t)} C^{r_\ell}(u_\ell^-(s),v_\ell^-(s))dW(s),\varphi\bigg)
\end{split}
\end{equation}
for all $\varphi \in V_{m_\ell}$ and for all $t\in (0,T]$.

The following theorem is the main result of the paper.
Recall that $\lambda$ arises from Assumptions AC as $\lambda = 2\max(\lambda_A, \lambda_B, \kappa)$.
\begin{theorem}[Existence and convergence]
\label{thm:main}
Let Assumptions~{\em AC} and {\em B}  and let (\ref{embedding}) hold.
Let $u_0 \in L^2(\Omega; V_B)$ and $v_0 \in L^2(\Omega; H)$ be $\F_0$-measurable and let $f\in \mathcal{L}^q\left({V_A}^*\right)$.
Then the stochastic evolution equation~\eqref{eq:1} possesses a solution $v\in \mathcal{L}^p(V_A)$ according to Definition~\ref{def:soln} with $u=u_0+Kv \in \mathcal{L}^2(V_B)$.

Furthermore, consider $(m_\ell,N_\ell)_{\ell \in \N}$ with $m_\ell\to \infty$ and $N_\ell \to \infty$ as $\ell \to \infty$ such that $\sup_{\ell\in\mathbb{N}} \lambda \tau_\ell < 1$. Let $(u^0_\ell)_{\ell \in \N} \subset L^2(\Omega;V_B)$,  $(v^0_\ell)_{\ell \in \N} \subset L^2(\Omega;H)$ be sequences of
$\F_0$-measurable random variables with values in $V_{m_\ell}$ such that $u^0_\ell \to u_0$ in $L^2(\Omega;V_B)$ and $v^0_\ell \to v_0$ in $L^2(\Omega;H)$ as $\ell \to \infty$. Let $(f_\ell)_\ell\in \N$ be given by (\ref{approx-fff}) and (\ref{eq:ext1}). The numerical scheme \eqref{eq:scheme_continuous_formulation} then admits a unique solution with
\begin{equation*}
\begin{split}
& u_\ell \rightharpoonup u \text{ in } L^2((0,T)\times\Omega;V_B)
\text{ and } v_\ell \rightharpoonup v \text{ in } L^p((0,T)\times \Omega;V_A)\\
& 
u_\ell(T) \to u(T) \text{ in } L^2(\Omega;V_B) \text{ and } v_\ell(T) \to v(T) \text{ in } L^2(\Omega;H) \text{ as } \ell \to \infty .
\end{split}
\end{equation*}
\end{theorem}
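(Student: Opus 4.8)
\section*{Proof proposal}

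The plan is to prove existence and convergence simultaneously by analysing the full discretization along the four-step programme announced in the introduction: well-posedness together with a priori bounds for the discrete scheme, extraction of weak limits by compactness, identification of the nonlinear limits by a monotonicity argument, and finally the passage from a subsequence to the whole sequence via the uniqueness result. First I would establish that the scheme admits a unique solution. For fixed $n$, equation~\eqref{eq:2bb} determines $v^n\in V_{m_\ell}$ from the previous iterates through an equation of the form $\langle \mathcal{A}_n v^n,\varphi\rangle = \langle g_n,\varphi\rangle$ for all $\varphi\in V_{m_\ell}$, where $\mathcal{A}_n w := w/\tau_\ell + Aw + \tau_\ell Bw$ (using $H=H^*$). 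Taking $u=v$ in~\eqref{eq:orig_monotonicity} shows $A+\lambda_A I$ is monotone, so together with the strictly positive $I/\tau_\ell$ and $\tau_\ell B$ terms, $\mathcal{A}_n$ is strongly monotone with constant $1/\tau_\ell-\lambda_A>0$ once $\lambda\tau_\ell<1$; it is hemicontinuous and coercive by Assumption~AC and~\eqref{eq:mod_coercivity}. Existence and uniqueness on the finite-dimensional space $V_{m_\ell}$ then follow from the standard theory of monotone operators (in finite dimensions via a corollary of Brouwer's theorem), and measurability of $v^n$ in $\omega$ is inherited step by step.

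Next come the a priori estimates. Testing~\eqref{eq:2bb} with $\varphi=v^n$ and multiplying by $\tau_\ell$, I would use the identities $2(v^n-v^{n-1},v^n)=|v^n|^2-|v^{n-1}|^2+|v^n-v^{n-1}|^2$ and, by the symmetry of $B$, $2\tau_\ell\langle Bu^n,v^n\rangle = |u^n|_B^2-|u^{n-1}|_B^2+|u^n-u^{n-1}|_B^2$. Splitting the stochastic term as $(C^{r_\ell}\Delta W^n,v^{n-1})+(C^{r_\ell}\Delta W^n,v^n-v^{n-1})$, the first summand is a martingale increment (here the choice $\Delta W^1=0$ and the $\F_{t_{n-1}}$-measurability of $C^{r_\ell}(u^{n-1},v^{n-1})$ enter) and the second is absorbed by $|v^n-v^{n-1}|^2$ after Young's inequality, leaving the quadratic-variation contribution $\tfrac12\tau_\ell|C^{r_\ell}(u^{n-1},v^{n-1})|_{l^2(H)}^2$. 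Summing over $n$, taking expectations to kill the martingale, invoking the coercivity-like bound~\eqref{eq:mod_coercivity} and the growth bound~\eqref{eq:Cbdd} to reconcile the time-index shift in the $C$-term, and applying a discrete Gronwall argument together with a discrete Burkholder--Davis--Gundy/Doob estimate for the maximal term, I obtain
\[
\E\max_{1\le n\le N_\ell}\big(|v^n|^2+|u^n|_B^2\big)+\E\sum_{n=1}^{N_\ell}\tau_\ell\|v^n\|_{V_A}^p\le c
\]
uniformly in $\ell$. Through~\eqref{eq:ext1} and~\eqref{eq:ext2} this yields uniform bounds on $v_\ell,v_\ell^-$ in $L^p((0,T)\times\Omega;V_A)$, on $u_\ell,u_\ell^-$ in $L^2((0,T)\times\Omega;V_B)$, on $Av_\ell$ in $L^q((0,T)\times\Omega;V_A^*)$ by the growth condition, and on $C^{r_\ell}(u_\ell^-,v_\ell^-)$ in $L^2((0,T)\times\Omega;l^2(H))$.

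By reflexivity I extract a subsequence along which $v_\ell\rightharpoonup v$, $u_\ell\rightharpoonup u$, $Av_\ell\rightharpoonup a$ in $L^q(\,\cdot\,;V_A^*)$ and $C^{r_\ell}(u_\ell^-,v_\ell^-)\rightharpoonup\sigma$ in $L^2(\,\cdot\,;l^2(H))$. One checks that $v_\ell$ and $v_\ell^-$ (likewise $u_\ell,u_\ell^-$) share the same weak limit, their difference being a single time shift controlled by the bounds above, so that the adapted version $v_\ell^-$ forces $v$ and $\sigma$ to be progressively measurable; a short computation identifies $u=u_0+Kv$ as the limit of the discrete primitive. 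Passing to the limit in the continuous formulation~\eqref{eq:scheme_continuous_formulation}, tested against adapted step functions and using $Bu_\ell\rightharpoonup Bu$ (linearity and boundedness of $B$), produces the limit identity
\[
(v(t),z)+\int_0^t\langle a(s)+Bu(s),z\rangle\,ds=(v_0,z)+\int_0^t\langle f(s),z\rangle\,ds+\int_0^t\big(z,\sigma(s)\,dW(s)\big).
\]
It remains to show $a=Av$ and $\sigma=C(u,v)$, and this identification is the main obstacle and the reason the embedding~\eqref{embedding} is imposed. The strategy is the stochastic Minty--Browder monotonicity trick. Using the Gelfand triple $V_A\hookrightarrow H\hookrightarrow V_A^*$ together with $V_B^*\hookrightarrow V_A^*$, the limit process admits an It\^o formula for the combined energy $|v(t)|^2+|u(t)|_B^2$, producing an energy identity in which $\E\int_0^T|\sigma|_{l^2(H)}^2$ appears with the correct sign, while the discrete energy balance above gives a matching expression for $\E(|v_\ell(T)|^2+|u_\ell(T)|_B^2)$. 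The delicacy is that weak convergence only yields $\E\int_0^T|\sigma|^2\le\liminf\E\int_0^T|C^{r_\ell}(u_\ell^-,v_\ell^-)|^2$ by lower semicontinuity, the wrong direction; this is exactly compensated by the \emph{coupled} monotonicity-like condition~\eqref{eq:mod_monotonicity}, in which the $A$-difference and the $|C|^2$-difference sit together. Concretely, for arbitrary $\phi\in\mathcal{L}^p(V_A)$ with $\psi=u_0+K\phi$ one forms
\begin{equation*}
\begin{split}
0\le\E\int_0^T\Big(&2\langle Av_\ell-A\phi,v_\ell-\phi\rangle+\lambda|v_\ell-\phi|^2+\lambda|u_\ell-\psi|_B^2\\
&-|C^{r_\ell}(u_\ell^-,v_\ell^-)-C(\psi,\phi)|_{l^2(H)}^2\Big)\,dt,
\end{split}
\end{equation*}
expands the squares, and uses the two energy balances to pass to the limit, the offending $\liminf$ now entering on the favourable side; after cancellation this leaves a variational inequality for the pair $(a,\sigma)$ against all $\phi$. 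Choosing $\phi=v-\epsilon z$, dividing by $\epsilon$, letting $\epsilon\downarrow 0$ and invoking the hemicontinuity (hence demicontinuity) of $A$ yields $a=Av$ and simultaneously $\sigma=C(u,v)$; the truncation $r_\ell\to\infty$ is absorbed using the $l^2(H)$-summability in Assumption~AC. Finally, once the limits are identified the two energy balances coincide in the limit, so the weak convergences at $t=T$ are accompanied by convergence of norms and are therefore strong in $L^2(\Omega;H)$ and $L^2(\Omega;V_B)$; the It\^o formula furnishes the required $H$-valued continuous modification, showing $v$ is a solution in the sense of Definition~\ref{def:soln}. Since Theorem~\ref{lemma:uniq} guarantees uniqueness, the usual subsequence argument upgrades the convergence from the extracted subsequence to the full sequence, completing the proof.
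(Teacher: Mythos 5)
Your overall architecture (discrete well-posedness via a Brouwer-type lemma, a priori bounds, weak compactness with the shifted adapted processes, passage to the limit in the scheme, It\^o formula for the limit energy, Minty-type identification, and uniqueness to upgrade from a subsequence to the whole sequence) coincides with the paper's. However, there is a genuine gap at the heart of your identification step: the exponential weighting $e^{-\lambda s}$ (the paper's Lemma~\ref{lemma:int_by_pts_cals}) is missing, and without it the monotonicity argument does not close when $\lambda>0$. Your monotonicity expression contains $\lambda|v_\ell-\phi|^2+\lambda|u_\ell-\psi|_B^2$ with a \emph{plus} sign; after substituting the (unweighted) discrete energy balance to eliminate $2\langle Av_\ell,v_\ell\rangle-|C^{r_\ell}|^2_{l^2(H)}$, the term $+\lambda\,\E\int_0^T(|v_\ell|^2+|u_\ell|_B^2)\,dt$ survives, and weak convergence gives only $\liminf_\ell \E\int(|v_\ell|^2+|u_\ell|_B^2)\ge \E\int(|v|^2+|u|_B^2)$ --- lower semicontinuity, which is the \emph{wrong} direction for a plus-signed term (your remark that ``the offending liminf now enters on the favourable side'' is correct for $-|C^{r_\ell}(\cdot)-C(\psi,\phi)|^2$, but not for these $\lambda$-terms). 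Carrying your computation through with $\phi=v$, $\psi=u$ one ends up with
\begin{equation*}
0 \;\le\; -\,\E\int_0^T|\sigma-C(u,v)|_{l^2(H)}^2\,dt
\;+\;\lambda\Bigl(\limsup_{\ell\to\infty} \E\int_0^T\bigl(|v_\ell|^2+|u_\ell|_B^2\bigr)\,dt \;-\; \E\int_0^T\bigl(|v|^2+|u|_B^2\bigr)\,dt\Bigr),
\end{equation*}
where the bracket is nonnegative and possibly strictly positive, so $\sigma=C(u,v)$ does not follow. The paper avoids this by applying Lemma~\ref{lemma:int_by_pts_cals} to \emph{both} the discrete energy inequality and the limit energy identity~\eqref{eq:7}: the weighted discrete balance acquires the extra term $-\lambda\int_0^T e^{-\lambda s}\,\E(|v_\ell(s)|^2+|u_\ell(s)|_B^2)\,ds$, which cancels \emph{identically, before any limit is taken}, the $+\lambda e^{-\lambda s}(|v_\ell|^2+|u_\ell|_B^2)$ produced by expanding the monotonicity condition; what remains is linear in $(v_\ell,u_\ell,Av_\ell,C(u_\ell,v_\ell))$ plus terms whose signs are compatible with weak lower semicontinuity. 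Your proof needs this device (or a substitute such as strong $L^2((0,T)\times\Omega;H)$-compactness of $v_\ell$, which is not available here); as written it works only when $\lambda_A=\lambda_B=\kappa=0$. The same defect propagates into your final step, since the strong convergence of $v_\ell(T)$ and $u_\ell(T)$ is also extracted from the weighted inequalities.

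Two smaller points. First, the monotonicity-like condition~\eqref{eq:mod_monotonicity} compares $A$ and $C$ \emph{at the same arguments}, so you may not insert $|C^{r_\ell}(u_\ell^-,v_\ell^-)-C(\psi,\phi)|^2$ into it; the paper works with $|C(u_\ell,v_\ell)-C(z,w)|^2$, uses $|C^{r_\ell}|_{l^2(H)}\le|C|_{l^2(H)}$ when invoking the energy balance, and separately shows in Lemma~\ref{lemma:weaklimits} that $C^{r_\ell}(u_\ell^-,v_\ell^-)$, $C^{r_\ell}(u_\ell,v_\ell)$ and $C(u_\ell,v_\ell)$ all share the same weak limit. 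Second, the claim that $v_\ell$ and $v_\ell^-$ have the same weak limit because ``their difference is a single time shift controlled by the bounds above'' is misleading: no norm bound on $v_\ell-v_\ell^-$ is available (the a priori estimate does not control $\sum_j|v^j-v^{j-1}|^2$, unlike the $u$-increments), and the identification genuinely requires the averaging-operator argument of Lemma~\ref{lemma:indent_with_steklov}. Your maximal estimate $\E\max_n(|v^n|^2+|u^n|_B^2)\le c$ via Burkholder--Davis--Gundy is not needed for the argument; the paper only proves, and only uses, $\max_n\E(\cdots)\le c$.
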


The proof can be briefly summarized as follows:
We first need to show that the fully discretized problem has a unique solution, which is covered by Theorem~\ref{thm:existence_disc}.
Then we obtain a priori estimates for the fully discrete problem (Theorem~\ref{thm:disc-apriori}), so that we can extract weakly convergent subsequences using compactness arguments (Lemma~\ref{lemma:weaklimits}).
At this point, the only step left to do is to identify the weak limits from the nonlinear terms. Convergence of the full sequence of approximations (and not just of a subsequence) follows because of the uniqueness result.

\begin{remark}
\label{rem:VA}
Our results require the assumption that $V_A \hookrightarrow V_B$.
The need for this assumption arises from the use of the standard It\^o formula for the square
of the norm, which also provides existence of a continuous modification. 	
However, if $A$, $B$ and $C$ are deterministic then
Pardoux~\cite[Part III, Chapter 2, Theorem 3.1]{pardoux:thesis}
proves the energy equality~\eqref{eq:7} and sufficient regularity
without the need to assume $V_A \hookrightarrow V_B$.
It remains open whether this approach can be extended to the situation of random and time-dependent operators.
\end{remark}

%%%%%%%%%%%%%%%%%%%%%%%%%%%%%%%%%%%%%%%%%%%%%%%%
%%%%%%%%%%%%%%%%%%%%%%%%%%%%%%%%%%%%%%%%%%%%%%%%
\section{Full discretization: existence, uniqueness and a priori estimates}
\label{sec:fulldisc}
In this section, we show that the full discretization~\eqref{eq:2b} has a unique solution, adapted to the filtration given, and prove an a priori estimate.
The a priori estimate is essential for the proof of the main result of the paper as this allows us to use compactness arguments to extract weakly convergent subsequences from the sequence of approximate solutions.

Existence of solutions to the discrete problem will be proved by applying the following lemma.
\begin{lemma}
\label{lemma:consequenceofbrower}
Let $\vec{h}:\R^m \to \R^m$ be continuous. If there is $R>0$ such that $\vec{h}(\vec{v})\cdot \vec{v} \geq 0$ whenever $\|\vec{v}\|_{\R^m} = R$ then there exists $\vec{\bar{v}}$ satisfying $\|\vec{\bar{v}}\|_{\R^m} \leq R$ and $\vec{h}(\vec{\bar{v}}) = 0$.
\end{lemma}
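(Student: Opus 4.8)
The plan is to argue by contradiction using Brouwer's fixed point theorem. Suppose the conclusion fails, i.e.\ that $\vec{h}(\vec{v}) \neq 0$ for every $\vec{v}$ in the closed ball $\bar{B}_R := \{\vec{v}\in\R^m : \|\vec{v}\|_{\R^m}\leq R\}$. Since $\bar{B}_R$ is convex and compact, it is a natural domain for Brouwer's theorem, and the idea is to manufacture from $\vec{h}$ a continuous self-map of $\bar{B}_R$ whose fixed point cannot exist.

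Concretely, I would introduce the normalized map
\[
g(\vec{v}) := -R\,\frac{\vec{h}(\vec{v})}{\|\vec{h}(\vec{v})\|_{\R^m}}, \qquad \vec{v}\in\bar{B}_R.
\]
Under the contradiction hypothesis the denominator never vanishes, so $g$ is well-defined and continuous on $\bar{B}_R$; moreover $\|g(\vec{v})\|_{\R^m}=R$ for every $\vec{v}$, so $g$ maps $\bar{B}_R$ into itself. Brouwer's theorem then provides a fixed point $\vec{\bar{v}}=g(\vec{\bar{v}})\in\bar{B}_R$. Because $\|g(\cdot)\|_{\R^m}\equiv R$, this fixed point is automatically forced onto the sphere, $\|\vec{\bar{v}}\|_{\R^m}=R$, which is precisely the set on which the sign hypothesis is assumed to hold.

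The contradiction is then obtained by testing the fixed-point identity against $\vec{h}(\vec{\bar{v}})$: substituting $\vec{\bar{v}}=-R\,\vec{h}(\vec{\bar{v}})/\|\vec{h}(\vec{\bar{v}})\|_{\R^m}$ gives
\[
\vec{h}(\vec{\bar{v}})\cdot\vec{\bar{v}} = -R\,\|\vec{h}(\vec{\bar{v}})\|_{\R^m} < 0,
\]
where strict negativity uses $\vec{h}(\vec{\bar{v}})\neq 0$. This contradicts the standing assumption $\vec{h}(\vec{v})\cdot\vec{v}\geq 0$ on $\|\vec{v}\|_{\R^m}=R$, and hence $\vec{h}$ must vanish at some $\vec{\bar{v}}\in\bar{B}_R$, as claimed. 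I do not expect any genuine obstacle here: the lemma is a classical ``acute angle'' criterion and the whole argument is essentially a bookkeeping exercise. The only points meriting a moment of care are verifying that the normalized map is a legitimate continuous self-map of the ball (which is exactly where the contradiction hypothesis enters, ensuring the denominator is nonzero) and observing that the fixed point is necessarily driven to the boundary sphere, so that the hypothesis on $\vec{h}$ is applicable at that point.
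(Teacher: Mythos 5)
Your proof is correct and follows exactly the route the paper indicates: the paper does not spell out the argument but states that the lemma "is proved by contradiction from Brouwer's fixed point theorem" (citing Gajewski--Gr\"oger--Zacharias), which is precisely your normalized-map contradiction argument. Your write-up simply supplies the standard details of that same approach, so there is nothing to reconcile.
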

\begin{proof}
The lemma is proved by contradiction from Brouwer's fixed point theorem (see, e.g.,~\cite[Ch. 3, Lemma 2.1]{ggz}).
\end{proof}
To obtain the appropriate measurability of the solution to the discrete problem we need the following lemma, which is a modification of Gy\"ongy~\cite[Lemma 3.8]{gyongy:on:stochastic:III}.
\begin{lemma}
\label{lemma:measurability}
Let $(S,\Sigma)$ be a measure space.
Let $\vec{f}:S\times \R^m \to \R^m$ be a function
that is $\Sigma$-measurable in its first argument for every $\vec{x}\in \R^m$,
that is continuous in its second argument for every $\alpha \in S$
and moreover such that
for every $\alpha \in S$ the equation $\vec{f}(\alpha, \vec{x}) = \vec{0}$ has a unique solution $\vec{x}=\vec{g}(\alpha)$. Then $\vec{g}:S \to \R^m$ is $\Sigma$-measurable.
\end{lemma}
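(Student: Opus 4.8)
The plan is to show that the preimage $\vec{g}^{-1}(K)$ lies in $\Sigma$ for every closed ball $K\subset\R^m$; since the closed balls generate the Borel $\sigma$-algebra $\mathcal{B}(\R^m)$ and the collection $\{A\subseteq\R^m : \vec{g}^{-1}(A)\in\Sigma\}$ is a $\sigma$-algebra, this yields the measurability of $\vec{g}$. The bridge between the algebraic condition $\vec{f}(\alpha,\cdot)=\vec{0}$ and measurability will be the scalar function $\alpha\mapsto m_K(\alpha):=\inf_{\vec{x}\in K}|\vec{f}(\alpha,\vec{x})|$ attached to each compact set $K$. The two facts I would need are that $m_K$ is $\Sigma$-measurable and that, by uniqueness of the zero together with compactness, $\vec{g}(\alpha)\in K$ if and only if $m_K(\alpha)=0$.

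First I would record the measurability of $m_K$. For each fixed $\vec{x}$ the map $\alpha\mapsto\vec{f}(\alpha,\vec{x})$ is $\Sigma$-measurable by hypothesis, so $\alpha\mapsto|\vec{f}(\alpha,\vec{x})|$ is $\Sigma$-measurable as the composition with the continuous Euclidean norm. Since $K$ is a subset of the separable metric space $\R^m$ it admits a countable dense subset $\{\vec{x}_i\}_{i\in\N}$, and the continuity of $\vec{x}\mapsto|\vec{f}(\alpha,\vec{x})|$ gives $m_K(\alpha)=\inf_{i\in\N}|\vec{f}(\alpha,\vec{x}_i)|$ for every $\alpha$. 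An infimum of countably many $\Sigma$-measurable functions is $\Sigma$-measurable, hence $m_K$ is $\Sigma$-measurable and $\{\alpha:m_K(\alpha)=0\}\in\Sigma$.

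Next I would establish the set identity. Because $\vec{x}\mapsto|\vec{f}(\alpha,\vec{x})|$ is continuous and $K$ is compact, the infimum defining $m_K(\alpha)$ is attained; thus $m_K(\alpha)=0$ if and only if $\vec{f}(\alpha,\cdot)$ has a zero inside $K$. By the assumed uniqueness this zero can only be $\vec{g}(\alpha)$, so $m_K(\alpha)=0$ if and only if $\vec{g}(\alpha)\in K$. Consequently $\vec{g}^{-1}(K)=\{\alpha:m_K(\alpha)=0\}\in\Sigma$ for every compact $K$, in particular for every closed ball, and the $\sigma$-algebra argument above finishes the proof.

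I expect the only genuinely delicate point to be the interplay in this last step: the reduction to compact sets is precisely what lets the infimum be attained, which is what converts the analytic statement $m_K(\alpha)=0$ into the membership $\vec{g}(\alpha)\in K$, and it is here that the uniqueness hypothesis is indispensable, since it rules out the possibility that $K$ captures a spurious zero different from $\vec{g}(\alpha)$. Everything else — the joint use of measurability in $\alpha$, continuity in $\vec{x}$, and separability of $K$ to pass to a countable infimum — is routine, and notably no completeness of $\Sigma$ is required.
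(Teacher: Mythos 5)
Your proof is correct and follows essentially the same route as the paper's: both characterise the preimage of a test set as the set of $\alpha$ where $\inf_{\vec{x}}\|\vec{f}(\alpha,\vec{x})\|_{\R^m}$ vanishes, and both get measurability of that infimum from measurability in $\alpha$, continuity in $\vec{x}$, and passage to a countable dense subset. The one substantive difference is that you test against compact sets (closed balls), whereas the paper tests against arbitrary closed sets $F$ and writes $\min_{\vec{x}\in F}\|\vec{f}(\alpha,\vec{x})\|_{\R^m}=0$, justified only by ``since $F$ is closed''. Your version is in fact the more careful one: over an unbounded closed set the infimum need not be attained, so its vanishing does not by itself force a zero of $\vec{f}(\alpha,\cdot)$ to lie in $F$ (take $m=1$, $f(\alpha,x)=x\,e^{-x^2}$, whose unique zero is $x=0$, and $F=[1,\infty)$: the infimum over $F$ is $0$ but $0\notin F$). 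So the set identity the paper asserts genuinely requires compactness, or an extra step such as intersecting $F$ with closed balls --- exactly the point you isolate as the delicate one. Your reduction to closed balls, together with the observation that they generate $\mathcal{B}(\R^m)$ and that the sets with measurable preimage form a $\sigma$-algebra, closes this gap at no extra cost, and your remark that no completeness of $\Sigma$ is needed is also accurate.
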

\begin{proof}
Let $F$ be a closed set in $\R^m$.
Then
\begin{equation*}
\vec{g}^{-1}(F) := \{\alpha \in S : \vec{g}(\alpha) \in F\} = \left\{\alpha \in S : \min_{\vec{x}\in F}\|\vec{f}(\alpha, \vec{x})\|_{\mathbb{R}^m} = 0 \right\},
\end{equation*}
since $F$ is closed.
But since $\vec{f} = \vec{f}(\alpha,\vec{x})$ is continuous in the second argument for every $\alpha \in S$ and $\Sigma$-measurable in the first argument for every $\vec{x} \in \mathbb{R}^m$, we see that $\vec{g}^{-1}(F) \in \Sigma$.
\end{proof}

Let $W^r := (W^r_j)_{j\in \N}$ and $\Delta W^{r,n}:= (\Delta W^{r,n}_j)_{j\in \N}$
with 
\[
W^r_j := \left\{
\begin{array}{lll}
	W_j & \text{for} & j=1,\ldots,r\,, \\
	0   &  \text{for} & j > r
\end{array}
\right.
\quad \text{and}\quad 
\Delta W^{r,n}_j := \left\{
\begin{array}{lll}
	\Delta W^n_j & \text{for} & j=1,\ldots,r\,, \\
	0   &  \text{for} & j > r.
\end{array}
\right.
\]

We are now ready to prove existence of solutions to the full discretization.
\begin{theorem}[Existence and uniqueness for full discretization]
\label{thm:existence_disc}
Let $m,N,r\in \N$ be fixed and let Assumptions~{\em AC} and {\em B} hold. 
Moreover, let $\lambda\tau \leq 1$.
Then, given $V_m$-valued and $\F_{0}$-measurable
random variables $u^0,v^0$ and right-hand side $f\in \mathcal{L}^q(V_A^*)$,
the fully discrete problem~\eqref{eq:2b} has
a unique solution $(v^n)_{n=1}^N$ in the sense that
if $(v_1^n)_{n=1}^N$ and $(v_2^n)_{n=1}^N$ both
satisfy~\eqref{eq:2b}  then
\[
\P\left(\max_{n=1,\ldots,N} |v_1^n - v_2^n| = 0\right) = 1.
\]
Furthermore, for all $n=1,\dots,N$, the $V_m$-valued random variables $v^n$ are $\mathcal{F}_{t_n}$-measurable.
\end{theorem}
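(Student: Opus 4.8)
The plan is to solve the scheme~\eqref{eq:2b} step by step in $n$ and argue by induction, exploiting that the equivalent one-step form~\eqref{eq:2bb} determines $v^n$ from the previously computed $v^0,\dots,v^{n-1}$ together with the data $u^0$, $f^n$ and the increment $\Delta W^n$. Since $B$ is linear, I would split $B(u^0+\tau\sum_{k=1}^n v^k)=B(u^0+\tau\sum_{k=1}^{n-1}v^k)+\tau Bv^n$, so that at step $n$ the only unknown is $v^n\in V_m$ while the right-hand side and the stochastic term $C^r(\cdots)\Delta W^n/\tau$ form a known, $V_m$-independent quantity. Fixing a basis $e_1,\dots,e_m$ of $V_m$ and writing $v=\sum_i\vec{v}_ie_i$, testing~\eqref{eq:2bb} against each $e_i$ recasts the problem as finding a root $\vec{v}\in\R^m$ of a map $\vec{h}\colon\R^m\to\R^m$; by linearity in the test function, $\vec{h}(\vec{v})\cdot\vec{v}$ equals the value at $\varphi=v$ of the left-hand side of~\eqref{eq:2bb} minus the data terms.

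For existence at a fixed step (pathwise, for $\omega$ outside the exceptional null set of Assumptions~AC and B) I would verify the hypotheses of Lemma~\ref{lemma:consequenceofbrower}. Continuity of $\vec{h}$ holds because $v\mapsto\frac1\tau(v,e_i)$ and $v\mapsto\tau\langle Bv,e_i\rangle$ are linear while $v\mapsto\langle Av,e_i\rangle$ is continuous by the demicontinuity of $A$ (recorded after Assumption~AC) on the finite-dimensional $V_m$. For the sign condition on a large sphere I would use the coercivity-like bound $\langle Av,v\rangle\ge\mu_A\|v\|_{V_A}^p-\lambda_A|v|^2-\kappa$ (take $u=0$ in~\eqref{eq:orig_coercivity}) and the strong positivity $\langle Bv,v\rangle\ge\mu_B\|v\|_{V_B}^2\ge0$ to get
\[
\vec{h}(\vec{v})\cdot\vec{v}\ \ge\ \Big(\tfrac1\tau-\lambda_A\Big)|v|^2+\mu_A\|v\|_{V_A}^p-\kappa-c(\omega)\|\vec{v}\|_{\R^m}.
\]
Since $\lambda\tau\le1$ forces $\lambda_A\le\lambda/2\le 1/(2\tau)$, the coefficient $\frac1\tau-\lambda_A\ge\frac1{2\tau}>0$, and (all norms on $V_m$ being equivalent) the quadratic term dominates the linear and constant ones, so $\vec{h}(\vec{v})\cdot\vec{v}\ge0$ on $\|\vec{v}\|_{\R^m}=R$ for large $R$, giving a root. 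For uniqueness at the step I would subtract two solutions $v_1^n,v_2^n$ built from identical past data, test against $w:=v_1^n-v_2^n$, and use~\eqref{eq:orig_monotonicity} with equal first arguments of $C$ (so its right-hand side is nonnegative) to obtain $\langle Av_1^n-Av_2^n,w\rangle\ge-\lambda_A|w|^2$; with $\langle Bw,w\rangle\ge0$ this yields $0\ge(\frac1\tau-\lambda_A)|w|^2$, hence $w=0$ because $\frac1\tau-\lambda_A>0$.

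It remains to propagate measurability, which I expect to be the main obstacle. I would prove by induction that $v^n$ is $\F_{t_n}$-measurable, assuming $v^0,\dots,v^{n-1}$ are $\F_{t_{n-1}}$-measurable (the base case being the given $\F_0$-measurability of $v^0$), by applying Lemma~\ref{lemma:measurability} with $(S,\Sigma)=(\Omega,\F_{t_n})$ to $\vec{f}(\omega,\vec{v}):=\vec{h}(\vec{v})$. I must check that $\omega\mapsto\vec{f}(\omega,\vec{v})$ is $\F_{t_n}$-measurable for fixed $\vec{v}$: the terms $\langle Av,e_i\rangle$ and $\langle Bv,e_i\rangle$ are $\F_0$-measurable by the weak $\F_0$-measurability of $A$ and $B$ at the fixed element $v\in V_m$, the term $\langle f^n,e_i\rangle$ is $\F_{t_n}$-measurable, and the stochastic contribution requires the composite $\omega\mapsto C^r\big(u^0(\omega)+\tau\sum_{k=1}^{n-1}v^k(\omega),v^{n-1}(\omega),\omega\big)$ to be $\F_{t_{n-1}}$-measurable, after which multiplication by the $\F_{t_n}$-measurable increment $\Delta W^n$ keeps it $\F_{t_n}$-measurable. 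The delicate point is this composite, since weak $\F_0$-measurability of $C$ alone does not make it measurable in random arguments; I would therefore first establish continuity of $C$ in its arguments. This follows from~\eqref{eq:orig_monotonicity}, which bounds $|C(u,w)-C(v,z)|_{l^2(H)}^2$ by $2\langle Aw-Az,w-z\rangle+2\lambda_A|w-z|^2+2\lambda_B|u-v|_B^2$; as $(u,w)\to(v,z)$ the last two terms vanish while $|\langle Aw-Az,w-z\rangle|\le\|Aw-Az\|_{V_A^*}\|w-z\|_{V_A}\to0$ by the growth condition and $\|w-z\|_{V_A}\to0$. Continuity in the arguments together with weak (hence, by separability of $l^2(H)$, strong) $\F_0$-measurability makes $C$ a Carath\'eodory map, so it is $\mathcal{B}(V_m\times V_m)\otimes\F_0$-measurable and its composition with the $\F_{t_{n-1}}$-measurable arguments is $\F_{t_{n-1}}$-measurable. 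Finally, because the existence/uniqueness argument holds only off an $\F_0$-null set $\Omega_0$, I would redefine $\vec{f}(\omega,\cdot)$ to be the identity for $\omega\in\Omega_0$ (so the unique root there is $0$) to meet the ``unique root for every $\omega$'' hypothesis of Lemma~\ref{lemma:measurability}; by completeness this null-set modification preserves $\F_{t_n}$-measurability of the resulting solution map, which equals $v^n$ almost surely. The global uniqueness claim $\P(\max_n|v_1^n-v_2^n|=0)=1$ then follows because the finitely many step-wise a.s. equalities ($n=1,\dots,N$) fail only on a finite union of null sets.
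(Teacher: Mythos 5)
Your proposal follows essentially the same route as the paper's proof: a step-by-step induction in $n$, recasting each step as a root-finding problem for a map $\vec{h}$ on $\R^m$, existence of a root via Lemma~\ref{lemma:consequenceofbrower} using the coercivity-like condition and $\lambda_A \tau \le 1/2$, uniqueness of the root via the monotonicity of $A+\lambda_A I$ together with the strong positivity of $B$, and $\F_{t_n}$-measurability via Lemma~\ref{lemma:measurability}. If anything, you supply more detail than the paper does on the joint (Carath\'eodory-type) measurability needed to compose $C$ with the random arguments $u^{n-1},v^{n-1}$ and on the null-set modification; the same brief argument should also be recorded for the data term $\langle B(u^{n-1}(\omega),\omega),\varphi_l\rangle$, which follows at once from the a.s.\ linearity and boundedness of $B$ and its weak $\F_0$-measurability.
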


\begin{proof}
We prove existence and uniqueness step by step.
Assume that the $V_m$-valued random variables $v^0, v^1, \ldots , v^{n-1}$
already satisfy~\eqref{eq:2b} (for all superscripts up to $n-1$).
Moreover, assume that $v^k$ is $\mathcal{F}_{t_k}$-measurable for $k=1,\dots,n-1$.
We will show that there is an $V_m$-valued and $\F_{t_n}$-measurable $v^n$
satisfying~\eqref{eq:2b}.

First recall that $u^k = u^0 + \tau\sum_{j=1}^k v^j$.
So $(u^k)_{k=0}^{n-1}$ is also known.
Recall that we are assuming that the dimension of $V_m$ is $m$.
Let $(\varphi_i)_{i=1}^m$ be a basis for $V_m$.
Then there is a one-to-one correspondence between any $w \in V_m$ and $\vec{w} = (w_1, \ldots, w_m)^T \in \R^m$ given by
$w = \sum_{i=1}^m w_i \varphi_i$.
We use this to define a norm on $\R^m$ by $\|\vec{w}\|_{\R^m} := \|w\|_{V_A}$.

Let $\Omega' \in \F_0$ be such that $\P(\Omega') = 1$ and such that, for all $\omega \in \Omega'$,
$t\mapsto \langle A(w+t z,\omega), v \rangle$ is continuous for any $w,z\in V_A$,
the joint monotonicity-like condition and the coercivity condition
on $A$ and $C$ are satisfied
and $B$ is linear, symmetric and strongly positive.
This is possible due to Assumptions AC and B.
For an arbitrary $\omega \in \Omega'$ and an arbitrary $v\in V_m$
and hence for some $\vec{v} = (v_1,\ldots,v_m)^T \in \R^m$,
define $\vec{h}: \Omega' \times \R^m \to \R^m$,
component-wise, for $l=1,\ldots,m$, as
\begin{equation*}
\begin{split}
h(\omega,\vec{v})_l :={} & \frac{1}{\tau}(v - v^{n-1}(\omega),\varphi_l) + \langle A(v,\omega), \varphi_l\rangle + \langle B(u^{n-1}(\omega) + \tau v, \omega),\varphi_l \rangle \\
& - \langle f^n(\omega), \varphi_l\rangle
- \left( C^r(u^{n-1}(\omega),v^{n-1}(\omega),\omega)) \frac{\Delta W^n(\omega)}{\tau},\varphi_l\right).
\end{split}
\end{equation*}
The first step in showing that~\eqref{eq:2b} has a solution is to show that
for each $\omega \in \Omega'$
there is some $\vec{v}$ such that $\vec{h}(\omega,\vec{v}) = \vec{0}$.
To that end, we would like to apply Lemma~\ref{lemma:consequenceofbrower}.
We see that
\begin{equation*}
\begin{split}
\vec{h}(\omega,\vec{v})\cdot \vec{v} ={} & \frac{1}{\tau}(v - v^{n-1}(\omega),v) + \langle A(v,\omega), v\rangle + \langle B(u^{n-1}(\omega) + \tau v,\omega),v \rangle \\
& - \langle f^n(\omega), v\rangle
- \left( C(u^{n-1}(\omega),v^{n-1}(\omega),\omega) \frac{\Delta W^{r,n}(\omega)}{\tau}, v\right).
\end{split}
\end{equation*}
Now we wish to find large $R(\omega) > 0$, which also depends on $m$,
such that if $\|v\|_{V_A} = R(\omega)$ then $\vec{h}(\omega,\vec{v})\cdot \vec{v} \geq 0$.
Note that since $V_A\hookrightarrow H$, we get
\begin{equation*}
(v-v^{n-1}(\omega),v) \geq |v|^2 - c|v^{n-1}(\omega)|\|v\|_{V_A}.
\end{equation*}
The coercivity in Assumption~{\em AC} together with Assumption~{\em B} imply
\begin{equation*}
\begin{split}
\vec{h}(\omega,\vec{v}) & \cdot \vec{v}  \geq{}   \frac{1}{\tau}(|v|^2 - c|v^{n-1}(\omega)|\|v\|_{V_A})  + \mu_A\|v\|_{V_A}^p 
+ \frac{1}{2}|C(0,v,\omega)|_{l^2(H)}^2 \\ 
& - \lambda_A|v|^2  
- \kappa - \|B(u^{n-1}(\omega),\omega)\|_{V_B^*}\|v\|_{V_B}
+ \tau \langle B(v,\omega), v\rangle \\
& - \|f^n(\omega)\|_{V_A^*}\|v\|_{V_A} 
- |C(u^{n-1}(\omega),v^{n-1}(\omega),\omega)|_{l^2(H)}
|v|\bigg|\frac{\Delta W^{r,n}(\omega)}{\tau}\bigg|.
\end{split}
\end{equation*}
Note that $V_m$ is finite dimensional and so there is $c_m > 0$ such that
$\|\varphi\|_{V_B} \leq c_m \|\varphi\|_{V_A}$ for all $\varphi \in V_m$.
Thus, noting also that 
$2\lambda_A\tau \le \lambda \tau \le 1$, we find that
\begin{equation*}
\begin{split}
\vec{h}(\omega,\vec{v}) & \cdot \vec{v}  \geq{}  \|v\|_{V_A}\bigg( \mu_A\|v\|_{V_A}^{p-1}
- c|v^{n-1}(\omega)| - c_m \|B(u^{n-1}(\omega),\omega)\|_{V_B^*}  \\
  & - \|f^n(\omega)\|_{V_A^*} - c |C(u^{n-1}(\omega),v^{n-1}(\omega),\omega)|_{l^2(H)}\bigg|\frac{\Delta W^{r,n}(\omega)}{\tau}\bigg| \bigg) - \kappa.
\end{split}
\end{equation*}
Now choose $R(\omega)$ large such that $R(\omega) \geq \kappa$ and also
\begin{equation*}
\begin{split}
& \mu_A R(\omega)^{p-1} - c|v^{n-1}(\omega)| - c_m \|B(u^{n-1}(\omega),\omega)\|_{V_B^*}
- \|f^n(\omega)\|_{V_A^*}
\\
& - c |C(u^{n-1}(\omega),v^{n-1}(\omega),\omega)|_{l^2(H)}
\bigg|\frac{\Delta W^{r,n}(\omega)}{\tau}\bigg| \geq 1.
\end{split}
\end{equation*}
Then, if $\|v\|_{V_A} = R(\omega)$, we have $h(\omega,\vec{v})\cdot \vec{v} \geq 0$.

Note that $\omega \in \Omega'$ and on this set we have linearity and boundedness
of $B$ and demicontinuity of $A$ (this follows from
the monotonicity-like assumption on $A$
and the hemicontinuity assumption on $A$). Thus the function $\vec{h}(\omega,\cdot)$ is continuous and
Lemma~\ref{lemma:consequenceofbrower} guarantees existence of $\vec{v}$
such that  $\vec{h}(\omega,\vec{v})=\vec{0}$.

Next we show that the zero of $\vec{h}(\omega,\cdot)$ is unique.
Assume that there are two distinct $\vec{v_1}$ and $\vec{v_2}$
such that $\vec{h}(\omega,\vec{v_1}) = \vec{0}$
and $\vec{h}(\omega,\vec{v_2}) = \vec{0}$.
Then
\begin{align*}
 0 = & \tau \left( \vec{h}(\omega, \vec{v}_1) - \vec{h}(\omega, \vec{v}_2) ,
 \vec{v}_1 -  \vec{v}_2 \right)
=
|v_1 - v_2|^2 \\
& + \tau \langle A(v_1,\omega) - A(v_2,\omega), v_1-v_2\rangle + \tau^2\langle B(v_1,\omega) - B(v_2,\omega) , v_1-v_2 \rangle .
\end{align*}
%%%
%%%Using~\eqref{eq:mod_monotonicity} we obtain
%%%\begin{equation*}
%%%\begin{split}
%%%|v_1 - v_2|^2 + \tau \langle B(v_1 - v_2,\omega) , v_1-v_2 \rangle
%%%- \frac{1}{2}\lambda \tau |v_1-v_2|^2\\
%%%- \frac{1}{2}\lambda \tau\langle \tau B(v_1 - v_2,\omega) , v_1-v_2 \rangle\leq 0.
%%%\end{split}
%%%\end{equation*}
%%%But we are assuming that $\tau\lambda \leq 1$ and so
%%%\begin{equation*}
%%%\frac{1}{2}|v_1 - v_2|^2 + \frac{1}{2}\langle \tau B(v_1 - v_2,\omega) , v_1-v_2 \rangle \leq 0.
%%%\end{equation*}
%%%
We recall that (\ref{eq:orig_monotonicity}) implies the monotonicity of $A + \lambda_A I$ and that $B$ is strongly positive. This yields
\begin{align*}
0 \ge |v_1 - v_2|^2 - \lambda_A \tau |v_1 - v_2|^2 + \mu_B \tau^2 \|v_1 - v_2\|_{V_B}^2 ,
\end{align*}
which shows that $v_1$ and $v_2$ cannot be distinct since $\lambda_A \tau \le 1/2$.
Hence the zero to $\vec{h}(\omega,\cdot)$ is unique.
Let $v^n(\omega) := v$ for $\omega \in \Omega'$ and $v^n(\omega) = 0$ for $\omega \in \Omega\setminus \Omega'$.
By Lemma~\ref{lemma:measurability}, we see that $v^n$ is $\F_{t_n}$-measurable.
\end{proof}

Now we need to obtain the a priori estimate.

\begin{theorem}[Discrete a~priori estimates]
\label{thm:disc-apriori}
Let $m,N,r\in \N$ be fixed and let Assumptions~{\em AC} and~{\em B} hold.
Moreover, for $f\in \mathcal{L}^q(V_A^*)$
let $(f_n)_{n=1}^N$ be given by (\ref{approx-fff}) and let $u^0$ and $v^0$ be $V_m$-valued and $\F_0$-measurable and such that $u^0 \in L^2(\Omega; H)$ and $v^0 \in L^2(\Omega; V_B)$.
Then for all $n = 1,\ldots, N$
\begin{equation}
\label{eq:apriori1}
\begin{split}
& \E\bigg[|v^n|^2  + |u^n|_B^2 + \sum_{j=1}^n |u^j - u^{j-1}|_B^2\bigg]\\
& \leq \E\bigg[|v^0|^2 + |u^0|_B^2
+ 2\tau \sum_{j=1}^n \langle f^j - Av^j,v^j\rangle  + \tau \sum_{j=1}^n|C^r(u^j,v^j)|_{l^2(H)}^2\bigg].
\end{split}
\end{equation}
Moreover, if $\lambda \tau < 1$ then
\begin{equation}
\label{eq:apriori2}
\begin{split}
& \E\bigg[|v^n|^2  +  |u^n|_B^2 +  \mu_A \tau \sum_{j=1}^n\|v^j\|_{V_A}^p +  \sum_{j=1}^n |u^j - u^{j-1}|_B^2\bigg]\\
& \leq c e^{\lambda T(1-\lambda \tau)^{-1}} \left( \E\bigg[|v^0|^2 + |u^0|_B^2\bigg] + \|f\|_{L^q((0,T)\times \Omega;V_A^*)}^q + T \right).
\end{split}
\end{equation}
\end{theorem}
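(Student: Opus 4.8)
The plan is to test the scheme \eqref{eq:2bb} with $\varphi = v^n$, multiply by $\tau$, sum over $n$, take expectations, and then close via a discrete Gronwall argument. Using $u^n - u^{n-1} = \tau v^n$ and the symmetry of $B$ (so that $\tau\langle B u^n, v^n\rangle = \langle Bu^n, u^n-u^{n-1}\rangle = (u^n, u^n-u^{n-1})_B$), the tested identity reads
\begin{equation*}
(v^n - v^{n-1}, v^n) + \tau\langle Av^n, v^n\rangle + (u^n, u^n - u^{n-1})_B = \tau\langle f^n, v^n\rangle + (C^r(u^{n-1},v^{n-1})\Delta W^n, v^n).
\end{equation*}
To the two bracket terms on the left I would apply the polarization identity $(a-b,a) = \tfrac12(|a|^2 - |b|^2 + |a-b|^2)$, in $(\cdot,\cdot)$ and in $(\cdot,\cdot)_B$ respectively. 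After summation this makes the left-hand side telescope and produces the numerical dissipation $\tfrac12\sum_{j=1}^n|v^j-v^{j-1}|^2$ together with $\tfrac12\sum_{j=1}^n|u^j-u^{j-1}|_B^2$.

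Writing $g^{j-1} := C^r(u^{j-1},v^{j-1})$, the stochastic contribution is the only subtle term, and the plan is to split $v^j = v^{j-1} + (v^j - v^{j-1})$. Because $g^{j-1}$ and $v^{j-1}$ are $\F_{t_{j-1}}$-measurable while $\Delta W^j$ is independent of $\F_{t_{j-1}}$ with zero mean, conditioning on $\F_{t_{j-1}}$ shows $\E[(g^{j-1}\Delta W^j, v^{j-1})] = 0$; this is precisely where adaptedness of the explicitly evaluated noise enters. For the remaining part I would use Young's inequality, $2(g^{j-1}\Delta W^j, v^j - v^{j-1}) \le |v^j - v^{j-1}|^2 + |g^{j-1}\Delta W^j|^2$, and here is the crux: the term $\sum_{j=1}^n|v^j-v^{j-1}|^2$ is cancelled exactly by the numerical dissipation coming from the left-hand side, while $\E|g^{j-1}\Delta W^j|^2 = \tau\,\E|g^{j-1}|_{l^2(H)}^2$ by independence of the increments (the discrete It\^o isometry). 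Note also that $\Delta W^1 = 0$ removes the $j=1$ contribution, which is what lets $v^0$ be merely $H$-valued. Collecting terms, and enlarging the right-hand side by the nonnegative quantity $\tau\,\E|C^r(u^n,v^n)|_{l^2(H)}^2$ to match the summation range in the statement, gives exactly \eqref{eq:apriori1}. The interchanges of expectation and summation, and the vanishing of the cross term, would be justified by a standard stopping-time localization.

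For \eqref{eq:apriori2} I would estimate the right-hand side of \eqref{eq:apriori1} using the coercivity \eqref{eq:mod_coercivity} with $w=v^j$, $u=u^j$, together with $|C^r(u^j,v^j)|_{l^2(H)}\le|C(u^j,v^j)|_{l^2(H)}$. The $\langle Av^j,v^j\rangle$ contributions cancel and leave
\begin{equation*}
-2\tau\langle Av^j, v^j\rangle + \tau|C^r(u^j,v^j)|_{l^2(H)}^2 \le -2\mu_A\tau\|v^j\|_{V_A}^p + \lambda\tau\big(|v^j|^2 + |u^j|_B^2 + 1\big).
\end{equation*}
The forcing is then absorbed by Young's inequality, $2\tau\langle f^j, v^j\rangle \le \mu_A\tau\|v^j\|_{V_A}^p + c\,\tau\|f^j\|_{V_A^*}^q$, so that $\mu_A\tau\|v^j\|_{V_A}^p$ survives with the correct sign on the left. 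Summing, bounding $\tau\sum_j\E\|f^j\|_{V_A^*}^q \le \|f\|_{L^q((0,T)\times\Omega;V_A^*)}^q$ by Jensen applied to \eqref{approx-fff}, and using $\lambda\tau\sum_j 1 \le \lambda T$, I obtain for $E_n := \E[|v^n|^2 + |u^n|_B^2]$ an inequality of the form $E_n \le D + \lambda\tau\sum_{j=1}^n E_j$ (plus the nonnegative dissipation terms on the left). Since $\lambda\tau<1$, the term $\lambda\tau E_n$ is absorbed on the left and the discrete Gronwall lemma applied to $E_n \le D + \tfrac{\lambda\tau}{1-\lambda\tau}\sum_{j=1}^{n-1}E_j$ yields $E_n \le D\,e^{\lambda T(1-\lambda\tau)^{-1}}$, where the hidden factor $(1-\lambda\tau)^{-1}$ in $D$ is itself controlled by the same exponential via $(1-x)^{-1}\le e^{x/(1-x)}$. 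Reinserting this bound for $\max_j E_j$ into the left-hand dissipation terms delivers \eqref{eq:apriori2}.

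The step I expect to be the main obstacle is the stochastic term in the second paragraph: one must simultaneously exploit the $\F_{t_{n-1}}$-adaptedness of the explicitly evaluated noise to annihilate the cross term in expectation, and arrange the exact cancellation of the Young remainder $\tfrac12|v^n-v^{n-1}|^2$ against the dissipation generated by the implicit time stepping. This matching is the discrete surrogate for the It\^o formula applied to $|v|^2$, and getting it to balance is what forces the specific structure of the scheme; the remaining coercivity bookkeeping and the discrete Gronwall estimate are routine.
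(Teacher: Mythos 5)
Your proposal is correct and follows essentially the same route as the paper's own proof: testing with $v^n$, the polarization identity in both $(\cdot,\cdot)$ and $(\cdot,\cdot)_B$, splitting the stochastic term as $v^j = v^{j-1}+(v^j-v^{j-1})$ so that independence kills the cross term and Young's inequality lets the remainder cancel against the numerical dissipation, the index shift $\sum_{j=2}^n|C^r(u^{j-1},v^{j-1})|^2_{l^2(H)}\le\sum_{j=1}^n|C^r(u^j,v^j)|^2_{l^2(H)}$, and then coercivity, Young, Jensen for $f^j$, absorption of the $n$-th term using $\lambda\tau<1$, and the discrete Gronwall lemma. The minor differences (multiplying by $\tau$ rather than $2\tau$, the explicit mention of stopping-time localization, and the slightly looser exponential constant from the additive rather than product form of Gronwall) are cosmetic.
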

\begin{proof}
By taking $\varphi = v^n$ in \eqref{eq:2bb} and using the relation
\begin{equation*}
(a-b , a) = \frac{1}{2}(|a|^2 - |b|^2 + |a-b|^2) ,
\end{equation*}
we get, for $j=1,\ldots,N$,
\begin{equation}
\label{eq:apriori_est_proof_1}
\begin{split}
& \frac{1}{2\tau}\big(|v^j|^2  -|v^{j-1}|^2  +|v^j-v^{j-1}|^2\big) + \langle Av^j + Bu^j, v^j\rangle \\
&  = \langle f^j, v^j \rangle + \bigg( C (u^{j-1}, v^{j-1})\frac{\Delta W^{r,j}}{\tau}, v^j \bigg).
\end{split}
\end{equation}
We note that $\langle Bu^j, v^j \rangle = (u^j,v^j)_B$ and so
\begin{equation*}
2\tau \sum_{j=1}^n (u^j,v^j)_B =
 2 \sum_{j=1}^n (u^j,u^j - u^{j-1})_B
= |u^n|_B^2 - |u^0|_B^2 + \sum_{j=1}^n |u^j - u^{j-1}|_B^2.
\end{equation*}
Thus, after multiplying by $2\tau$ and summing up from $j=1$ to $n$ in~\eqref{eq:apriori_est_proof_1}, we find
\begin{equation}
\label{eq:apriori_est_proof_2}
\begin{split}
&
|v^n|^2 + \sum_{j=1}^n|v^j - v^{j-1}|^2 + |u^n|_B^2 + \sum_{j=1}^n |u^j - u^{j-1}|_B^2 + 2\tau \sum_{j=1}^n \langle Av^j, v^j \rangle \\
& = |v^0|^2 + |u^0|_B^2 + 2\tau \sum_{j=1}^n \langle f^j ,v^j \rangle + 2 \sum_{j=1}^n ( C (u^{j-1}, v^{j-1})\Delta W^{r,j}, v^j ).
\end{split}
\end{equation}
Using Cauchy--Schwarz's and Young's inequalities, we obtain that
\begin{equation*}
\begin{split}
& ( C (u^{j-1}, v^{j-1})  \Delta W^{r,j}, v^j )\\
& = ( C (u^{j-1}, v^{j-1}) \Delta W^{r,j}, v^{j-1} ) + ( C (u^{j-1}, v^{j-1}) \Delta W^{r,j}, v^j - v^{j-1} ) \\
& \leq ( C (u^{j-1}, v^{j-1}) \Delta W^{r,j} , v^{j-1} ) + \frac{1}{2}|C (u^{j-1}, v^{j-1}) \Delta W^{r,j}|^2 + \frac{1}{2}|v^j - v^{j-1}|^2.
\end{split}
\end{equation*}
By the assumption on $(\F_t)$ and $W$, 
$\Delta W^{r,j}$ is independent of $\F_{t_{j-1}}$ and hence
\begin{equation*}
\E ( C(u^{j-1}, v^{j-1}) \Delta W^{r,j} , v^{j-1} ) = 0.
\end{equation*}
Furthermore, a straightforward calculation shows that
\begin{equation*}
\E |C (u^{j-1}, v^{j-1}) \Delta W^{r,j}|^2 =
\left\{
\begin{array}{ll}
0 &  \text{ if }  j = 1,\\
\tau \E |C^r (u^{j-1}, v^{j-1})|^2_{l^2(H)} & \,\,\textrm{if} \,\, j = 2,\ldots, N.
\end{array}
\right.
\end{equation*}
Using this and taking expectation in~\eqref{eq:apriori_est_proof_2} leads to
\begin{equation*}
\begin{split}
& \E\bigg[|v^n|^2 + |u^n|_B^2 + \sum_{j=1}^n |u^j - u^{j-1}|_2^B\bigg]\\
& \leq \E\bigg[|v^0|^2 + |u^0|_B^2
+ 2\tau \sum_{j=1}^n \langle f^j - Av^j,v^j\rangle  + \tau \sum_{j=2}^n|C^r(u^{j-1},v^{j-1})|_{l^2(H)}^2\bigg]
\end{split}
\end{equation*}
At this point, we only have to observe that
\begin{equation*}
\sum_{j=2}^n|C^r(u^{j-1},v^{j-1})|_{l^2(H)}^2 \leq \sum_{j=1}^n|C^r(u^j,v^j)|_{l^2(H)}^2
\end{equation*}
to obtain the first claim of the theorem.

Now we apply the coercivity condition in Assumption~{\em AC} and~\eqref{eq:mod_coercivity} to get, for any $j=1,\ldots, N$,
\begin{equation*}
-2\langle Av^j, v^j \rangle \leq - 2 \mu_A\|v^j\|_{V_A}^p - |C(u^j, v^j)|_{l^2(H)}^2 + \lambda |v^j|^2 + \lambda|u^j|_B^2 + \lambda.
\end{equation*}
Thus, again with Young's inequality, we find
\begin{equation*}
\begin{split}
& \E \bigg[|v^n|^2  + |u^n|_B^2 + \sum_{j=1}^n |u^j - u^{j-1}|_B^2 +  \mu_A \tau \sum_{j=1}^n \|v^j\|_{V_A}^p \bigg]
\\
&\leq \E\bigg[|v^0|^2 + |u^0|_B^2
 + c\tau \sum_{j=1}^n \|f^j\|_{V_A^*}^q + \lambda \tau\sum_{j=1}^n(1+|v^j|^2 + |u^j|_B^2)\bigg].
\end{split}
\end{equation*}
Then, since $\lambda \tau < 1$,
\begin{equation*}
\begin{split}
& \E \bigg[|v^n|^2  + |u^n|_B^2 + \sum_{j=1}^n |u^j - u^{j-1}|_B^2 + \mu_A \tau \sum_{j=1}^n \|v^j\|_{V_A}^p \bigg] \\
& \leq \frac{1}{1-\lambda \tau}\E\bigg[|v^0|^2 + |u^0|_B^2
+ c\tau \sum_{j=1}^n \|f^j\|_{V_A^*}^q + \lambda \tau\sum_{j=1}^{n-1}(|v^j|^2 + |u^j|_B^2) + \lambda T \bigg].
\end{split}
\end{equation*}
Since $f\in \mathcal{L}^q(V_A)$, we have
\begin{equation*}
\E\bigg[ \tau \sum_{j=1}^N\|f^j\|_{V_A^*}^q\bigg] \leq \E \int_0^T \|f(t)\|_{V_A^*}^q dt = \|f\|_{L^q((0,T)\times \Omega; V_A^*)}^q.
\end{equation*}
Finally, we can apply a discrete Gronwall lemma to obtain the second claim of the theorem and thus conclude the proof.
\end{proof}

%%%%%%%%%%%%%%%%%%%%%%%%%%%%%%%%%%%%%%%%%%%%%%%%
%%%%%%%%%%%%%%%%%%%%%%%%%%%%%%%%%%%%%%%%%%%%%%%%
\section{Weak limits from compactness}
\label{sec:weaklimits}
In this section, we consider a sequence of approximate problems (\ref{eq:scheme_continuous_formulation}) and
use compactness arguments and the a priori estimate of Theorem~\ref{thm:disc-apriori} to show that weak limits of the
piecewise-constant-in-time prolongations of the fully discrete approximate solutions exist and that they satisfy an equation closely resembling~\eqref{eq:1}.

Recall that we have constructed $v_\ell^-, v_\ell$ and $u_\ell^-, u_\ell$ in~\eqref{eq:ext1} and~\eqref{eq:ext2} by interpolating the solution of the fully discrete problem~\eqref{eq:2b}.
The following corollary is a direct consequence of  the a priori estimates of Theorem~\ref{thm:disc-apriori}.
\begin{corollary}
\label{corollary:to_apriori_est}
Let the assumptions of Theorem~\ref{thm:main} be fulfilled. Then
\begin{equation}
\label{eq:corollary_to_apriori_est_1}
\begin{split}
& \sup_{t\in [0,T]}\E|v_\ell^-(t)|^2 \leq c, \,\, \sup_{t\in [0,T]}\E|u_\ell^-(t)|_B^2 \leq c\,\,\textrm{ and }\,\,  \E\int_0^T \|v_\ell^-(t)\|_{V_A}^p dt \leq c,\\
& \sup_{t\in [0,T]}\E|v_\ell(t)|^2 \leq c, \,\, \sup_{t\in [0,T]}\E|u_\ell(t)|_B^2 \leq c\,\,\textrm{ and }\,\,  \E\int_0^T \|v_\ell(t)\|_{V_A}^p dt \leq c.
\end{split}
\end{equation}
Furthermore,
\begin{equation}
\label{eq:corollary_to_apriori_est_2}
\begin{split}
& \E\int_0^T \|Av_\ell^-(t)\|_{V_A^*}^q \, dt \leq c,\quad \E\int_0^T \|Av_\ell(t)\|_{V_A^*}^q \, dt \leq c,\\
&\E\int_0^T \|Bu_\ell(t)\|_{V_B^*}^2 \, dt \leq c,\\
& \E\int_0^T |C(u_\ell^-(t)),v_\ell^-(t)|_{l^2(H)}^2\, dt \leq c, \quad \E\int_0^T |C(u_\ell(t)),v_\ell(t)|_{l^2(H)}^2\, dt \leq c.  \\
\end{split}
\end{equation}
Finally,
\begin{equation}
\label{eq:corollary_to_apriori_est_3}
\E\int_0^T |u_\ell(t) - u_\ell^-(t)|_B^2 \,dt \leq c\tau_\ell .
\end{equation}
\end{corollary}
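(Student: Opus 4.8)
The plan is to read off every bound directly from the discrete a priori estimate~\eqref{eq:apriori2} of Theorem~\ref{thm:disc-apriori}, using only the defining relations~\eqref{eq:ext1}--\eqref{eq:ext2} of the interpolants together with the growth and boundedness hypotheses on $A$, $B$ and $C$. The one genuinely essential preliminary is to check that the constant on the right-hand side of~\eqref{eq:apriori2} can be chosen independently of $\ell$. This is where the standing assumptions of Theorem~\ref{thm:main} enter: since $\sup_\ell \lambda\tau_\ell < 1$, the prefactor $e^{\lambda T(1-\lambda\tau_\ell)^{-1}}$ is bounded uniformly in $\ell$; since $u^0_\ell \to u_0$ in $L^2(\Omega;V_B)$ and $v^0_\ell \to v_0$ in $L^2(\Omega;H)$, the sequences $\E[|v^0_\ell|^2 + |u^0_\ell|_B^2]$ are bounded; and $\|f\|_{L^q((0,T)\times\Omega;V_A^*)}$ is a fixed finite quantity. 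Hence the right-hand side of~\eqref{eq:apriori2} is $\le c$ with $c$ independent of both $\ell$ and $n$.

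First I would treat~\eqref{eq:corollary_to_apriori_est_1}. By~\eqref{eq:ext1}, $v_\ell$ takes only the values $v^1,\dots,v^{N_\ell}$ and $u_\ell$ the values $u^1,\dots,u^{N_\ell}$, so $\sup_{t\in[0,T]}\E|v_\ell(t)|^2 = \max_{1\le n\le N_\ell}\E|v^n|^2$ and $\sup_{t\in[0,T]}\E|u_\ell(t)|_B^2 = \max_{1\le n\le N_\ell}\E|u^n|_B^2$, both controlled termwise by the uniform bound just established. The same holds for $v_\ell^-$ and $u_\ell^-$ from~\eqref{eq:ext2}, the only difference being the values $v^{N_\ell-1}$, $u^{N_\ell-1}$ and the contributions $0$, $u^0$ on $[0,\tau_\ell)$, none of which affects the supremum. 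For the $L^p$ term,~\eqref{eq:ext1} gives $\E\int_0^T \|v_\ell(t)\|_{V_A}^p\,dt = \tau_\ell\sum_{n=1}^{N_\ell}\E\|v^n\|_{V_A}^p$, which is $\le c$ after dividing the corresponding term in~\eqref{eq:apriori2} by $\mu_A$; the sum for $v_\ell^-$ differs only by omitting the last index, so it is bounded by the same quantity.

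Next I would obtain~\eqref{eq:corollary_to_apriori_est_2} by combining the first line with the structural hypotheses. The growth condition of Assumption~AC gives $\|Av_\ell\|_{V_A^*}^q \le c_A^q(1+\|v_\ell\|_{V_A})^{(p-1)q} = c_A^q(1+\|v_\ell\|_{V_A})^{p}$ since $(p-1)q = p$, so $\E\int_0^T\|Av_\ell\|_{V_A^*}^q\,dt \le c(1 + \E\int_0^T\|v_\ell\|_{V_A}^p\,dt) \le c$, and likewise for $v_\ell^-$. For $B$, Assumption~B and the equivalence of $\|\cdot\|_{V_B}$ with $|\cdot|_B$ yield $\|Bu_\ell\|_{V_B^*}^2 \le c\,|u_\ell|_B^2$, whence $\E\int_0^T\|Bu_\ell\|_{V_B^*}^2\,dt \le c\,T\sup_{t}\E|u_\ell(t)|_B^2 \le c$. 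For $C$, estimate~\eqref{eq:Cbdd} bounds $|C(u_\ell,v_\ell)|_{l^2(H)}^2$ by $c(1 + |u_\ell|_B^2 + |v_\ell|^2 + \|v_\ell\|_{V_A}^p)$, and integrating in $(t,\omega)$ and invoking~\eqref{eq:corollary_to_apriori_est_1} gives the claimed bound, again identically for the minus-interpolants. Finally,~\eqref{eq:corollary_to_apriori_est_3} follows because on each interval $(t_{n-1},t_n)$ the difference $u_\ell - u_\ell^-$ equals $u^n - u^{n-1}$ (and equals $u^1-u^0$ on $[0,\tau_\ell)$), so $\E\int_0^T|u_\ell(t)-u_\ell^-(t)|_B^2\,dt = \tau_\ell\,\E\sum_{n=1}^{N_\ell}|u^n - u^{n-1}|_B^2 \le c\,\tau_\ell$ directly from~\eqref{eq:apriori2}.

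The only point requiring genuine care is the uniformity of $c$ in $\ell$ discussed in the first paragraph; everything else is bookkeeping of the interpolation indices, and I expect no real obstacle beyond keeping track of the off-by-one shift in the definition of $v_\ell^-$ and $u_\ell^-$ on $[0,\tau_\ell)$.
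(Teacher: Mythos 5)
Your proposal is correct and follows essentially the same route as the paper: uniform boundedness of the right-hand side of~\eqref{eq:apriori2} (using $\sup_\ell\lambda\tau_\ell<1$ and the convergence of the initial data), then the growth condition on $A$, the boundedness of $B$ and estimate~\eqref{eq:Cbdd} for~\eqref{eq:corollary_to_apriori_est_2}, and the identity $\E\int_0^T|u_\ell(t)-u_\ell^-(t)|_B^2\,dt=\tau_\ell\,\E\sum_{k=1}^{N_\ell}|u^k-u^{k-1}|_B^2$ for~\eqref{eq:corollary_to_apriori_est_3}. The paper's proof is merely a terser statement of exactly these steps, so your write-up simply supplies the bookkeeping the paper leaves implicit.
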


\begin{proof}
In view of the assumptions, the right-hand side of (\ref{eq:apriori2}) is uniformly bounded with respect to $\ell$.
This immediately implies~\eqref{eq:corollary_to_apriori_est_1}.
The assumptions on the growth of $A$ and $B$
together with \eqref{eq:Cbdd} and the first part of the corollary imply~\eqref{eq:corollary_to_apriori_est_2}.
Finally, \eqref{eq:corollary_to_apriori_est_3} is a consequence of~\eqref{eq:apriori2} and the observation that
\begin{equation*}
\E\int_0^T |u_\ell(t) - u_\ell^-(t)|_B^2\,dt = \tau_\ell \E\sum_{k=1}^{N_\ell}  |u^k - u^{k-1}|_B^2.
\end{equation*}
\end{proof}

We will need the following lemma to match the limits of the approximations $v_\ell$ of $v$ with their ``delayed'' and progressively measurable counterparts $v_\ell^-$,
see also Gy\"{o}ngy and Millet~\cite{gyongy:millet:on:discretization}.

\begin{lemma}
\label{lemma:indent_with_steklov}
Let $X$ be a separable and reflexive Banach space and let $\bar{p} \in (1,\infty)$.
Consider $\left( (x^n_\ell)_{n=0}^{N_\ell} \right)_{\ell\in \N}$ with $x^n_\ell \in L^{\bar{p}}(\Omega;X)$ for all $n = 0, 1, \ldots , N_\ell$ and $\ell \in \N$.
Consider the piecewise-constant-in-time processes $x_\ell$ and $x_\ell^-$ with $x_\ell(t_n) = x_\ell^-(t_n) = x^n_\ell$  and
\begin{equation*}
x_\ell(t) = x^n \,\,\textrm { if }\,\, t\in (t_{n-1},t_n) \,\, \textrm{ and } x_\ell^-(t) = x^{n-1} \,\,\textrm { if }\,\, t\in (t_{n-1},t_n)
\end{equation*}
for $n=1,\ldots,N_\ell$, $\ell\in \N$.
Assume that $(x_\ell)_{\ell\in\N}$ and $(x_\ell^-)_{\ell\in\N}$ are bounded in  $L^{\bar{p}}((0,T)\times\Omega; X)$.
Then there is a subsequence denoted by $\ell'$  and $x , x^- \in L^{\bar{p}}((0,T)\times\Omega; X)$ such that $x_{\ell'} \rightharpoonup {x}$
and $x_{\ell'}^- \rightharpoonup x^-$ in $L^{\bar{p}}((0,T)\times\Omega; X)$
as $\ell' \to \infty$ with $x = x^-$.
\end{lemma}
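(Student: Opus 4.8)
The plan is to split the argument into two independent parts: a soft compactness step that produces the weak limits, and a harder identification step that forces the two limits to coincide.

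For the compactness step I would recall that, since $X$ is separable and reflexive and $\bar p \in (1,\infty)$, the Bochner--Lebesgue space $L^{\bar p}((0,T)\times\Omega; X)$ is reflexive, with dual identified with $L^{\bar q}((0,T)\times\Omega; X^*)$ for $\bar q = \bar p/(\bar p-1)$ (the same identification already used in Section~\ref{sec:2}). As $(x_\ell)_{\ell\in\N}$ and $(x_\ell^-)_{\ell\in\N}$ are bounded in this reflexive space, the Eberlein--\v{S}mulian theorem yields a subsequence along which $x_{\ell'} \rightharpoonup x$; passing to a further subsequence (not relabelled) gives $x_{\ell'}^- \rightharpoonup x^-$ as well, with $x, x^- \in L^{\bar p}((0,T)\times\Omega; X)$.

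The substance of the lemma is the identity $x = x^-$, which I would prove by showing $x_{\ell'} - x_{\ell'}^- \rightharpoonup 0$, i.e. $\E\int_0^T \langle g(t), x_{\ell'}(t) - x_{\ell'}^-(t)\rangle\,dt \to 0$ for every $g \in L^{\bar q}((0,T)\times\Omega; X^*)$. The key observation is that the two interpolations differ by a single time shift: for $t\in(t_{n-1},t_n)$ one has $x_\ell(t) = x^n_\ell$ and $x_\ell^-(t) = x^{n-1}_\ell$, hence $x_\ell^-(t) = x_\ell(t-\tau_\ell)$ for $t > \tau_\ell$. Substituting $s = t-\tau_\ell$, I would rewrite the pairing as $\E\int_0^{T-\tau_\ell}\langle g(t) - g(t+\tau_\ell),\, x_\ell(t)\rangle\,dt$ plus two boundary contributions, one over $(0,\tau_\ell)$ coming from the first subinterval and one over $(T-\tau_\ell,T)$ produced by the change of variables. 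Since $\bar q < \infty$, translation is strongly continuous on $L^{\bar q}((0,T); L^{\bar q}(\Omega; X^*))$, so $\|g(\cdot) - g(\cdot+\tau_\ell)\|_{L^{\bar q}} \to 0$; combined with the uniform $L^{\bar p}$ bound on $(x_\ell)$ and Hölder's inequality, the main term vanishes. The two boundary terms are integrals over sets of measure $\tau_\ell \to 0$, so they vanish by the uniform integrability following from the same $L^{\bar p}$ bound.

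The main obstacle is the bookkeeping in this change of variables and the control of the endpoint contributions, since the shift transports mass across the boundary of $(0,T)$; this is routine but must be handled carefully. To keep it clean, I would first establish the limit for $g$ in the dense class of finite sums $g(t,\omega) = \sum_i \eta_i(t)\,h_i(\omega)$ with $\eta_i \in C_c^\infty(0,T)$ and $h_i \in L^{\bar q}(\Omega; X^*)$, where translation continuity is elementary and the boundary terms are absent for small $\tau_\ell$, and then extend to arbitrary $g$ using the uniform bound on $x_{\ell'} - x_{\ell'}^-$ and the density of this class in $L^{\bar q}((0,T)\times\Omega; X^*)$.
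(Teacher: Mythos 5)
Your proof is correct, and at the structural level it mirrors the paper's: reflexivity of $L^{\bar p}((0,T)\times\Omega;X)$ gives the weak limits, and $x=x^-$ is proved by a duality argument that shifts the time discrepancy between $x_\ell$ and $x_\ell^-$ onto the test function, where strong (rather than weak) convergence is available. The difference is the operator performing this transfer. You use the exact relation $x_\ell^-(t)=x_\ell(t-\tau_\ell)$ for a.e.\ $t>\tau_\ell$, change variables, and invoke strong continuity of translations in $L^{\bar q}$, controlling two boundary strips of measure $\tau_\ell$ by H\"older's inequality; your implicit point that the smallness must come from the factor involving the fixed $g\in L^{\bar q}$ and not from the approximations (indeed only $\tau_\ell\,\E\|x^0_\ell\|_X^{\bar p}$, not $\E\|x^0_\ell\|_X^{\bar p}$, is controlled by the hypotheses) is exactly the right mechanism. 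The paper instead introduces the grid-adapted averaging operator $(S_\ell y)(t)=\tau_\ell^{-1}\int_{\theta_\ell^+(t)}^{\theta_\ell^+(t+\tau_\ell)}y(s)\,ds$, establishes the adjoint identity $\int_0^T\langle (S_\ell y)(t),x_\ell(t)\rangle\,dt=\int_{\tau_\ell}^T\langle y(t),x_\ell^-(t)\rangle\,dt$ (exact because $x_\ell$ is piecewise constant on the grid), and uses $S_\ell y\to y$ strongly, which leaves only one boundary term but requires the convergence of the grid-dependent averages, a fact the paper dispatches with ``standard arguments''. What your route buys: the key analytic input is the textbook translation-continuity lemma rather than a bespoke claim about $S_\ell$, and your formulation---showing directly that $x_{\ell'}-x_{\ell'}^-\rightharpoonup 0$ and combining this with the two weak limits---is tidier than the paper's three-term telescoping, which as printed asserts that its first and last terms vanish although each in fact converges to $\E\int_0^T\langle y(t),x(t)-x^-(t)\rangle\,dt$ (the conclusion $D=2D$ still yields $D=0$, but your arrangement avoids the slip). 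The cost is the change-of-variables bookkeeping, which you flag and which your fallback via the dense class $g=\sum_i\eta_i h_i$ with $\eta_i\in C_c^\infty(0,T)$, combined with the uniform bound on $x_{\ell'}-x_{\ell'}^-$, handles cleanly.
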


\begin{proof}
The existence of a subsequence and of $x, x^- \in L^{\bar{p}}((0,T)\times\Omega; X)$ such that
$x_{\ell'} \rightharpoonup {x}$
and $x_{\ell'}^- \rightharpoonup x^-$ in $L^{\bar{p}}((0,T)\times\Omega; X)$
as $\ell' \to \infty$
follows from standard compactness arguments since $L^{\bar{p}}((0,T)\times\Omega; X)$ is reflexive. It remains to show that
$x = x^-$.

To that end, we will employ the averaging operator $S_\ell:L^{\bar{q}}((0,T)\times \Omega;X^*) \to L^{\bar{q}}((0,T)\times \Omega;X^*)$ ($1/\bar{p} + 1/\bar{q} = 1$) defined by
\begin{equation*}
(S_\ell y)(t) :=
\left\{
\begin{array}{ll}
\displaystyle
\frac{1}{\tau_\ell}\int_{\theta_\ell^+(t)}^{\theta_\ell^+(t+\tau_\ell)} y(s)ds &  \ \textrm{ if } \  t \in [0,T-\tau_\ell] , \\
0 &  \ \textrm { otherwise. }
\end{array}
\right.
\end{equation*}
It can be shown for all $y \in L^{\bar{q}}((0,T)\times \Omega;X^*)$, using standard arguments, that $S_\ell y \to y$ in $L^{\bar{q}}((0,T)\times \Omega;X^*)$ as $\ell \to \infty$.

Let $y\in L^{\bar{q}}((0,T)\times \Omega;X^*)$.
A short calculation then reveals that
\begin{equation}
\label{eq:calc_with_S}
\int_0^T \langle(S_\ell y)(t), x_\ell(t)\rangle dt = \int_{\tau_\ell}^T \langle y(t),x_\ell^-(t)\rangle dt
\end{equation}
and hence
\begin{equation*}
\begin{split}
& \E\int_0^T \langle y(t), x(t) - x^-(t)\rangle \, dt
=  \E\int_0^T \langle y(t), x(t) - x_{\ell'}^-(t)\rangle \,dt \\
& + \E\int_0^T \langle y(t),x_{\ell'}^-(t) - x_{\ell'}(t)\rangle\, dt
  + \E\int_0^T \langle y(t),x_{\ell'}(t) - x^-(t)\rangle\,dt.
\end{split}
\end{equation*}
The first and last integral on the right-hand side converge to $0$ as $\ell'\to \infty$.
We observe that due to~\eqref{eq:calc_with_S}
\begin{equation*}
\begin{split}
\E\int_0^T \langle y(t),x_{\ell'}^-(t) - x_{\ell'}(t)\rangle\,dt ={}& \E\int_0^{\tau_\ell} \langle y(t), x_{\ell'}^-(t)\rangle\, dt \\
& + \E \int_0^T \langle(S_{\ell'}y)(t) - y(t),x_{\ell'}(t)\rangle\, dt.
\end{split}
\end{equation*}
The first integral on the right-hand side converges to $0$ since $\tau_\ell \to 0$ and since $(x_{\ell'}^-)_{\ell\in\N}$ is bounded in $L^{\bar{p}}((0,T)\times \Omega;X)$.
The second integral on the right-hand side converges to $0$ since
$S_{\ell'} y \to y$ in $L^{\bar{q}}((0,T)\times \Omega;X^*)$ as $\ell' \to \infty$ and since
$(x_{\ell'})_{\ell\in\N}$ is bounded in $L^{\bar{p}}((0,T)\times \Omega;X)$.
This finally shows that $x = x^-$ in $L^{\bar{p}}((0,T)\times \Omega;X)$.
\end{proof}

\begin{lemma}
\label{lemma:weaklimits}
Let the assumptions of Theorem~\ref{thm:main} be fulfilled.
Then there is a subsequence denoted by $\ell'$ such that:
\begin{enumerate}[(i)]
\item There is $v\in \mathcal{L}^p(V_A)$ such that $v_{\ell'}^- \rightharpoonup v$ and $v_{\ell'} \rightharpoonup v$ in $L^p((0,T)\times \Omega; V_A)$.
There is $\xi \in L^2(\Omega;H)$ such that $v_{\ell'}^-(T) = v_{\ell'}(T) \rightharpoonup \xi$ in $L^2(\Omega;H)$ as $\ell' \to \infty$.
\item
There is $u\in \mathcal{L}^2(V_B)$ such that $u_{\ell'}^- \rightharpoonup u$
and $u_{\ell'} \rightharpoonup u$ in $L^2((0,T)\times \Omega;V_B)$
as $\ell'\to \infty$.
Furthermore, $u - u_0 = Kv$ in $\mathcal{L}^p(V_A)$ and
 the paths of $u-u_0$ are absolutely continuous. Finally, $u_{\ell'}^-(T) = u_{\ell'}(T)\rightharpoonup u(T)$ in $L^2(\Omega; V_B)$ and $u(0) = u_0$.
\item There is $a \in \mathcal{L}^q(V_A^*)$ such that $Av_{\ell'} \rightharpoonup a$ in $L^q((0,T)\times \Omega; V_A^*)$.
There is $\bar{c} \in \mathcal{L}^2(l^2(H))$ such that 
$C^{r_{\ell'}}(u_{\ell'}^-,v_{\ell'}^-)$, $C(u_{\ell'},v_{\ell'})$
and $C^{r_{\ell'}}(u_{\ell'},v_{\ell'})$
all converge weakly to $\bar{c}$ in $L^2((0,T)\times \Omega; l^2(H))$ as $\ell' \to \infty$.
\end{enumerate}
\end{lemma}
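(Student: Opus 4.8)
The plan is to combine the uniform bounds from Corollary~\ref{corollary:to_apriori_est} with weak compactness in the relevant reflexive spaces, and then to use Lemma~\ref{lemma:indent_with_steklov} to identify the limits of the ``current'' prolongations with those of their ``delayed'', adapted counterparts. First I would note that $L^p((0,T)\times\Omega;V_A)$, $L^2((0,T)\times\Omega;V_B)$, $L^q((0,T)\times\Omega;V_A^*)$ and $L^2((0,T)\times\Omega;l^2(H))$ are reflexive, so each bounded sequence in~\eqref{eq:corollary_to_apriori_est_1}--\eqref{eq:corollary_to_apriori_est_2} has a weakly convergent subsequence; passing successively (finitely often) to subsequences and relabelling yields a single subsequence $\ell'$ along which all the weak limits below exist. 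For the pairs $(v_{\ell'},v_{\ell'}^-)$ and $(u_{\ell'},u_{\ell'}^-)$ I would apply Lemma~\ref{lemma:indent_with_steklov} (with $X=V_A$, $\bar p=p$ and with $X=V_B$, $\bar p=2$, respectively), which gives that each pair shares a common weak limit, say $v$ and $u$. Since $v_{\ell'}^-$ and $u_{\ell'}^-$ are progressively measurable and $\mathcal{L}^p(V_A)$, $\mathcal{L}^2(V_B)$ are closed (hence weakly closed, being closed subspaces), the limits satisfy $v\in\mathcal{L}^p(V_A)$ and $u\in\mathcal{L}^2(V_B)$. This gives the convergence statements in~(i) and the first part of~(ii); the terminal limits $\xi\in L^2(\Omega;H)$ and the $L^2(\Omega;V_B)$-limit of $u_{\ell'}(T)=u^{N_{\ell'}}$ follow from the bounds $\E|v^{N_\ell}|^2\le c$ and $\E|u^{N_\ell}|_B^2\le c$ contained in~\eqref{eq:apriori2}.

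Next I would establish $u-u_0=Kv$. The key is the exact discretization identity: for $t\in(t_{n-1},t_n]$ one has $u_\ell(t)-u^0_\ell-(Kv_\ell)(t)=(\theta_\ell^+(t)-t)\,v_\ell(t)$, whose $L^p((0,T)\times\Omega;V_A)$-norm is bounded by $\tau_\ell\,(\E\int_0^T\|v_\ell\|_{V_A}^p\,dt)^{1/p}\to 0$. Hence $u_\ell-u^0_\ell-Kv_\ell\to 0$ strongly. Since $u^0_{\ell'}\to u_0$, since $K$ is bounded and linear (so $Kv_{\ell'}\rightharpoonup Kv$), and since $u_{\ell'}\rightharpoonup u$, uniqueness of weak limits forces $u-u_0=Kv$. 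The paths of $u-u_0=Kv$ are then absolutely continuous because $(Kv)(t)=\int_0^t v(s)\,ds$ with $v(\cdot,\omega)\in L^1(0,T;V_A)$ for almost every $\omega$, and $u(0)=u_0$ because $(Kv)(0)=0$. For the terminal value I would use that $w\mapsto\int_0^T w\,dt$ is bounded and linear, so $u^{N_{\ell'}}=u^0_{\ell'}+\int_0^T v_{\ell'}\,dt\rightharpoonup u_0+\int_0^T v\,dt=u(T)$.

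For~(iii), the limit $a\in\mathcal{L}^q(V_A^*)$ of $Av_{\ell'}$ is obtained by applying Lemma~\ref{lemma:indent_with_steklov} with $X=V_A^*$, $\bar p=q$ to the pair $(Av_{\ell'},Av_{\ell'}^-)$ generated by $x^n_\ell:=Av^n$ (with the first-interval value set to $A(0)$, so that $x_\ell^-$ really equals $Av_\ell^-$); the common limit $a$ is progressively measurable because $Av_{\ell'}^-$ is. The delicate point is the identification of a single limit $\bar c$ for the three noise sequences. I would first apply Lemma~\ref{lemma:indent_with_steklov} with $X=l^2(H)$, $\bar p=2$ and $x^n_\ell:=C^{r_\ell}(u^n,v^n)$, setting $x^0_\ell:=C^{r_\ell}(u^0,0)$ to match the convention $v_\ell^-=0$, $u_\ell^-=u^0$ on $[0,\tau_\ell)$; this value is bounded in $L^2(\Omega;l^2(H))$ by~\eqref{eq:Cbdd}, so the lemma yields $C^{r_{\ell'}}(u_{\ell'},v_{\ell'})$ and $C^{r_{\ell'}}(u_{\ell'}^-,v_{\ell'}^-)$ converging to a common, progressively measurable $\bar c$. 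To pass from the truncated to the full noise, I would test against arbitrary $\psi\in L^2((0,T)\times\Omega;l^2(H))$ and use $(C^{r}(u,v),\psi)_{l^2(H)}=(C(u,v),\psi^{r})_{l^2(H)}$, where $\psi^{r}$ denotes the truncation of $\psi$; then $\E\int_0^T(C(u_{\ell'},v_{\ell'})-C^{r_{\ell'}}(u_{\ell'},v_{\ell'}),\psi)_{l^2(H)}\,dt=\E\int_0^T(C(u_{\ell'},v_{\ell'}),\psi-\psi^{r_{\ell'}})_{l^2(H)}\,dt\to 0$ by the uniform $L^2$-bound on $C(u_{\ell'},v_{\ell'})$ together with $\psi^{r_{\ell'}}\to\psi$ in $L^2((0,T)\times\Omega;l^2(H))$ as $r_{\ell'}\to\infty$. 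Hence $C(u_{\ell'},v_{\ell'})\rightharpoonup\bar c$ as well.

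The main obstacle I anticipate is exactly this unification of the three $C$-sequences: the arguments $(u_\ell,v_\ell)$ and $(u_\ell^-,v_\ell^-)$ differ and $C$ is nonlinear, so their limits cannot be matched by any continuity of $C$; the matching must instead be engineered through the delayed-process structure of Lemma~\ref{lemma:indent_with_steklov} and the truncation-test-function argument, with care taken over the first time interval, where the convention $v_\ell^-=0$ breaks the exact shift relation but contributes only on a set of measure $\tau_\ell\to 0$ with uniformly bounded integrand. Everything else reduces to weak compactness, weak continuity of bounded linear maps, and the closedness of the progressively measurable subspaces.
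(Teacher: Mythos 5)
Your proof is correct, and its skeleton is the same as the paper's: extract weakly convergent subsequences from the bounds of Corollary~\ref{corollary:to_apriori_est} using reflexivity, merge the limits of the shifted and unshifted prolongations via Lemma~\ref{lemma:indent_with_steklov}, use weak closedness of the progressively measurable subspaces to get $v\in\mathcal{L}^p(V_A)$, $u\in\mathcal{L}^2(V_B)$, $a\in\mathcal{L}^q(V_A^*)$, $\bar c\in\mathcal{L}^2(l^2(H))$, establish $u-u_0=Kv$ through the exact error identity $u_\ell-u_\ell^0-Kv_\ell=(\theta_\ell^+(\cdot)-\cdot)\,v_\ell$ with its $O(\tau_\ell)$ bound, and identify terminal values by testing against linear functionals. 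You deviate at two points. First, to merge the limits of $u_{\ell'}$ and $u_{\ell'}^-$ the paper does not reuse Lemma~\ref{lemma:indent_with_steklov} but instead the strong estimate~\eqref{eq:corollary_to_apriori_est_3}, which gives $\|u_\ell-u_\ell^-\|_{L^2((0,T)\times\Omega;V_B)}\le c\,\tau_\ell^{1/2}\to 0$; both routes are valid. Second, and more substantively, for passing from $C^{r_{\ell'}}(u_{\ell'},v_{\ell'})$ to $C(u_{\ell'},v_{\ell'})$ the paper asserts that the tail sums $\sum_{j\ge r_{\ell'}}\E\int_0^T|C_j(u_{\ell'},v_{\ell'})|^2\,dt$ vanish as $\ell'\to\infty$, i.e.\ strong $L^2$ convergence of the difference; that uniformity of the tails in $\ell'$ does not follow from the uniform bound~\eqref{eq:corollary_to_apriori_est_2} alone, so this step of the paper is under-justified as written. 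Your alternative---shifting the truncation onto a fixed test function via $(C^{r}(u,v),\psi)_{l^2(H)}=(C(u,v),\psi^{r})_{l^2(H)}$ and using $\psi^{r_{\ell'}}\to\psi$ strongly (dominated convergence) against the uniformly $L^2$-bounded sequence $C(u_{\ell'},v_{\ell'})$---yields only weak convergence of the difference to zero, but that is all the lemma asserts, and it is completely rigorous. Your care with the first-interval conventions (taking $x^0_\ell=A(0)$, respectively $C^{r_\ell}(u^0,0)$, so that the delayed prolongations of Lemma~\ref{lemma:indent_with_steklov} really coincide with $Av_\ell^-$ and $C^{r_\ell}(u_\ell^-,v_\ell^-)$) is likewise a detail the paper passes over silently.
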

\begin{proof}
We begin by observing that $L^p((0,T)\times\Omega;V_A)$, 
$\mathcal{L}(V_A)$ and $L^2(\Omega;H)$ are reflexive.
Then, due to Corollary~\ref{corollary:to_apriori_est} 
and due to e.g. Br\'ezis~\cite[Theorem 3.18]{brezis:functional}, there  are 
$v \in L^p((0,T)\times \Omega;V_A)$
${v}^- \in \mathcal{L}(V_A)$
and $\xi \in L^2(\Omega; H)$ and a subsequence denoted by $\ell'$ such that
$v_{\ell'}^- \rightharpoonup v^-$ and $v_{\ell'} \rightharpoonup {v}$  in $L^p((0,T)\times \Omega; V_A)$
as well as $v_{\ell'}(T) \rightharpoonup \xi$ in $L^2(\Omega; H)$ as $\ell'\to\infty$. 
To complete the proof of the first statement, we simply need to
apply Lemma~\ref{lemma:indent_with_steklov} to see that $v=v^-$.

Using the same argument as in the first part of the proof, we obtain
$u_{\ell'}^-\rightharpoonup u$
and $u_{\ell'}\rightharpoonup u$
in $L^2((0,T)\times \Omega;V_B)$ with $u\in \mathcal{L}^2(V_B)$ as well as
$u_{\ell'}(T) \rightharpoonup \eta$ with $\eta \in L^2(\Omega, V_B)$  as $\ell'\to\infty$. By the way, \eqref{eq:corollary_to_apriori_est_3} implies that
\begin{equation*}
\|u_\ell - u_\ell^-\|_{L^2((0,T)\times \Omega; V_B)} \to 0 \text{ as } \ell \to \infty ,
\end{equation*}
which also shows that the weak limits of $u_\ell$ and $u_\ell^-$ coincide.

Now we would like to show that $u-u_0 = Kv$. A straightforward calculation shows that
\begin{equation*}
u_{\ell} - u^0_{\ell} = Kv_{\ell} + e_{\ell} , \text{ where }
e_\ell(t):= \int_t^{\theta_\ell^+(t)}v_\ell(s)ds.
\end{equation*}
Another straightforward calculation also shows that
$Kv_{\ell'} \rightharpoonup Kv$ in $L^p((0,T)\times \Omega;V_A)$ since $v_{\ell'} \rightharpoonup v$ in $L^p((0,T)\times \Omega;V_A)$ as $\ell'\to\infty$.
Due to Theorem~\ref{thm:disc-apriori}, we have
\begin{equation*}
\begin{split}
\|e_\ell\|_{{L}^p((0,T)\times \Omega ; V_A)}^p
 & = \E\int_0^T \bigg\|\int_t^{\theta_\ell^+(t)}v_\ell(s)ds \bigg\|_{V_A}^p dt\\
& = \E\sum_{j=1}^{N_\ell} \int_{t_{j-1}}^{t_j} (t_j - t)^p \|v^{j}\|_{V_A}^p dt\\
& \le \tau_\ell^p \E \tau_\ell\sum_{j=1}^{N_\ell}\|v^{j}\|_{V_A}^p
\leq c\tau_\ell^p \to 0 \text{ as } \ell \to \infty.
\end{split}
\end{equation*}
It follows that
\begin{equation*}
u_{\ell'} - u^0_{\ell'} = Kv_{\ell'} + e_{\ell'}\rightharpoonup Kv
\end{equation*}
in $L^p((0,T)\times \Omega;V_A)$ as $\ell'\to\infty$, which shows that $u-u_0 = Kv$ in view of $u_{\ell'} \rightharpoonup u$ in $L^2((0,T)\times \Omega;V_B)$ as $\ell'\to\infty$ and $u^0_\ell \to u_0$ in $L^2(\Omega;V_B)$ as $\ell\to\infty$.

Hence almost all paths of $u-u_0$ are absolutely continuous as functions mapping $[0,T]$ into $V_A$. Moreover, $u(0) = u_0$ since $(Kv)(0) = 0$.

To complete the proof of the second statement of the lemma, we have to show that $\eta = u(T)$. Again, a straightforward calculation shows that
$(Kv_{\ell'})(T) \rightharpoonup (Kv)(T)$ in $L^p(\Omega;V_A)$
as $\ell'\to\infty$ since for all $g \in L^q(\Omega;V_A^*)$
\begin{equation*}
\E \,\langle g, (Kv_{\ell'})(T) - (Kv)(T)\rangle = \E \int_0^T \langle  g ,
v_{\ell'}(t) - v(t) \rangle dt
\end{equation*}
and since $v_{\ell'} \rightharpoonup v$ in $L^p((0,T)\times \Omega;V_A)$ as $\ell'\to\infty$.
Therefore, we find that $\eta - u_0 = (Kv)(T) = u(T) - u_0$.

The second part of Corollary~\ref{corollary:to_apriori_est} 
(see (\ref{eq:corollary_to_apriori_est_2})) implies (iii) with the same arguments as before. 
In particular, the weak limits of $Av_{\ell'}^-$ and of $C^{r_{\ell'}}(u_{\ell'}^-, v_{\ell'}^-)$ are progressively measurable and thus
$a\in \mathcal{L}^q(V_A^*)$
as well as $\bar{c}\in \mathcal{L}^2(l^2(H))$.
Indeed,~\eqref{eq:corollary_to_apriori_est_2} implies that
\[
\sum_{j=r_{\ell'}}^\infty \E\int_0^T |C_j(u_{\ell'}, v_{\ell'})|^2 dt \to 0
\]
as $\ell'\to \infty$. 
This in turn implies that 
\[
\|C^{r_{\ell'}}(u_{\ell'}, v_{\ell'})-C(u_{\ell'}, v_{\ell'})\|_{L^2((0,T)\times \Omega;l^2(H))} \to 0.
\]
Using this observation allows us to show that the weak limits of $C^{r_{\ell'}}(u_{\ell'}, v_{\ell'})$ 
and $C(u_{\ell'}, v_{\ell'})$ coincide in $L^2((0,T)\times \Omega;l^2(H))$.
Moreover, due to Lemma~\ref{lemma:indent_with_steklov}, the weak limits of $C^{r_{\ell'}}(u_{\ell'}, v_{\ell'})$ and 
$C^{r_{\ell'}}(u_{\ell'}^-, v_{\ell'}^-)$ also coincide.
\end{proof}

At this point, we are ready to take the limit in~\eqref{eq:scheme_continuous_formulation} along $\ell'\to\infty$.

\begin{lemma}
\label{lemma:eqns}
Let the assumptions of Theorem~\ref{thm:main} be fulfilled. Then for $(dt\times d\P)$-almost all $(t,\omega) \in (0,T)\times \Omega$
\begin{equation}
\label{eq:5a}
  v(t) + \int_0^t a(s)ds + \int_0^t Bu(s) ds = v_0 + \int_0^t f(s)ds + \int_0^t \bar{c}(s)dW(s) \text{ in } V_A^* ,
\end{equation}
and there is an $H$-valued continuous modification of $v$ (which we denote by $v$ again) such that for all $t\in [0,T]$
\begin{equation}
\label{eq:7}
\begin{split}
|v(t)|^2 + |u(t)|_B^2 ={} & |v_0|^2 + |u_0|_B^2 + \int_0^t \big[ 2\langle f(s)-a(s), v(s)\rangle + |\bar{c}(s)|^2\big]ds\\
& + 2\int_0^t (v(s),\bar{c}(s)dW(s)).
\end{split}
\end{equation}
Finally, $\xi = v(T)$ and thus $v_{\ell'}(T) \rightharpoonup v(T)$ in $L^2(\Omega; H)$ as $\ell'\to\infty$.
\end{lemma}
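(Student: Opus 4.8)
The plan is to pass to the weak limit in the continuous formulation \eqref{eq:scheme_continuous_formulation} of the scheme to obtain \eqref{eq:5a}, then to apply the It\^o formula for the square of the $H$-norm to produce the continuous modification and the energy equality \eqref{eq:7}, and finally to use the equation at $t=T$ to identify $\xi$ with $v(T)$.

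First I would fix $\varphi\in\bigcup_m V_m$ and a bounded progressively measurable scalar test process $\psi$, test \eqref{eq:scheme_continuous_formulation} against $\varphi$, multiply by $\psi(t)$ and integrate over $(0,T)\times\Omega$. For every $\ell'$ large enough that $V_m\subseteq V_{m_{\ell'}}$ this is legitimate, and I would let $\ell'\to\infty$ term by term using the weak convergences collected in Lemma~\ref{lemma:weaklimits}: $v_{\ell'}\rightharpoonup v$ handles $(v_{\ell'},\varphi)$ (via the embedding $V_A\hookrightarrow V_A^*$); $Av_{\ell'}\rightharpoonup a$, together with $u_{\ell'}\rightharpoonup u$ and the linearity and boundedness of $B$ (so $Bu_{\ell'}\rightharpoonup Bu$), and the strong convergence $f_\ell\to f$, handle the drift, where the time-shift $\theta_\ell^+(t)\to t$ contributes only an error of order $\tau_\ell^{1/q}$ that vanishes. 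For the stochastic term I would use that the It\^o integral is a bounded linear map from $\mathcal{L}^2(l^2(H))$ into $L^2(\Omega;H)$, hence weakly continuous, so $C^{r_{\ell'}}(u_{\ell'}^-,v_{\ell'}^-)\rightharpoonup\bar c$ transfers to the stochastic integrals, while $\tau_\ell\to0$ and $\theta_\ell^+(t)\to t$ are harmless. Density of $\bigcup_m V_m$ in $V_A$ and the arbitrariness of $\psi$ then yield \eqref{eq:5a} for $(dt\times d\P)$-almost all $(t,\omega)$.

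Next I would read \eqref{eq:5a} as the identity $v(t)=v_0+\int_0^t g(s)\,ds+\int_0^t\bar c(s)\,dW(s)$ in the Gelfand triple $V_A\hookrightarrow H\hookrightarrow V_A^*$ with $g:=f-a-Bu$. Here the assumption $V_A\hookrightarrow V_B$ is essential: it gives $V_B^*\hookrightarrow V_A^*$, so $Bu\in\mathcal{L}^2(V_B^*)\hookrightarrow\mathcal{L}^q(V_A^*)$ (as $q\le2$ on the finite measure space $(0,T)\times\Omega$) and hence $g\in\mathcal{L}^q(V_A^*)$, while $v\in\mathcal{L}^p(V_A)$ and the progressively measurable $\bar c\in\mathcal{L}^2(l^2(H))$ are already known. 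The standard It\^o formula for $|v(t)|^2$ (Krylov and Rozovski{\u\i}~\cite{krylov:rozovskii:stochastic}, Gy\"ongy and Krylov~\cite{gyongy:krylov:ito:formula}, or Pr\'ev\^ot and R\"ockner~\cite{prevot:rockner:concise}) then provides an $H$-valued continuous modification of $v$ and the identity
\begin{equation*}
|v(t)|^2=|v_0|^2+\int_0^t\big(2\langle g(s),v(s)\rangle+|\bar c(s)|^2\big)\,ds+2\int_0^t(v(s),\bar c(s)\,dW(s))
\end{equation*}
for all $t\in[0,T]$, almost surely. To recover \eqref{eq:7} I would rewrite the $B$-part of the drift: by Lemma~\ref{lemma:weaklimits} the paths of $u-u_0=Kv$ are absolutely continuous into $V_A$, hence into $V_B$, with $\dot u=v$; since $B$ is symmetric, $t\mapsto|u(t)|_B^2=\langle Bu(t),u(t)\rangle$ is absolutely continuous with derivative $2\langle Bu(t),v(t)\rangle$, so $\int_0^t2\langle Bu(s),v(s)\rangle\,ds=|u(t)|_B^2-|u_0|_B^2$. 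Substituting $g=f-a-Bu$ and moving this term to the left yields \eqref{eq:7}.

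Finally, to show $\xi=v(T)$ I would evaluate \eqref{eq:scheme_continuous_formulation} at $t=T$ (where $\theta_\ell^+(T)=T$), test against $\varphi\phi$ with $\varphi\in\bigcup_m V_m$ and $\phi\in L^\infty(\Omega)$, and pass to the limit exactly as above, using $v_{\ell'}(T)\rightharpoonup\xi$ in $L^2(\Omega;H)$. The resulting identity coincides, after the continuous modification, with \eqref{eq:5a} evaluated at $t=T$ (legitimate since both sides are now continuous in $V_A^*$), whence $(\xi-v(T),\varphi)=0$ in expectation against every such $\phi$; density of $\bigcup_m V_m$ then gives $\xi=v(T)$ in $L^2(\Omega;H)$. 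I expect the decisive step to be the application of the It\^o formula: it is precisely what forces the embedding $V_A\hookrightarrow V_B$ (cf.\ Remark~\ref{rem:VA}), and verifying that \eqref{eq:5a} has the exact form required — progressively measurable $v$, drift in $\mathcal{L}^q(V_A^*)$, diffusion in $\mathcal{L}^2(l^2(H))$ — together with the pathwise chain rule for $|u|_B^2$ is where the real care lies; the weak passages to the limit, though numerous, are routine once the weak continuity of the It\^o integral is invoked.
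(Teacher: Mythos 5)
Your proposal is correct and follows essentially the same route as the paper: test \eqref{eq:scheme_continuous_formulation} with products $\psi\bar\varphi$, control the time-shift and truncation errors via the a priori bounds and It\^o's isometry, pass to the weak limits of Lemma~\ref{lemma:weaklimits} (using weak continuity of the It\^o integral), apply the It\^o formula for the square of the norm in the Gelfand triple $V_A\hookrightarrow H\hookrightarrow V_A^*$ (which is exactly where $V_A\hookrightarrow V_B$ is needed so that $Bu\in\mathcal{L}^q(V_A^*)$), and identify $\xi=v(T)$ by the same limiting argument at $t=T$ combined with continuity. The only minor deviation is that you obtain $2\int_0^t\langle Bu(s),v(s)\rangle\,ds=|u(t)|_B^2-|u_0|_B^2$ via the chain rule for absolutely continuous paths, whereas the paper derives this identity \eqref{eq:Buv} by a Fubini/symmetry computation on $Kv$; both arguments are valid.
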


\begin{proof}
In what follows, we only write $\ell$ instead of $\ell'$.
Let us fix $m \leq m_\ell$ and take $\varphi = \psi(t)\bar{\varphi}$ in
\eqref{eq:scheme_continuous_formulation} with $\bar{\varphi} \in V_m$ and $\psi \in L^p((0,T)\times \Omega; \R)$.
Integrating from $0$ to $T$ and taking the expectation then leads to
\begin{equation*}
\begin{split}
& \E\int_0^T \bigg[(v_\ell(t),\varphi(t)) + \bigg\langle\int_0^{\theta_\ell^+(t)} (Av_\ell(s)+Bu_\ell(s))ds,\varphi(t) \bigg\rangle \bigg]dt \\
& = \E\int_0^T \bigg[ (v^0_\ell,\varphi(t)) + \bigg\langle\int_0^{\theta_\ell^+(t)}f_\ell(s)ds,\varphi(t) \bigg\rangle \\
& \quad + \bigg(\int_{\tau_\ell}^{\theta_\ell^+(t)} C^{r_\ell}(u_\ell^-(s),v_\ell^-(s))dW(s),\varphi(t)\bigg)\bigg]dt.
\end{split}
\end{equation*}
We subsequently see that
\begin{equation}
\label{eq:6}
\begin{split}
& \E\int_0^T \bigg[(v_\ell(t),\varphi(t)) + \langle (KAv_\ell)(t),\varphi(t)\rangle + \langle (K Bu_\ell)(t),\varphi(t)\rangle \bigg] dt \\
& = \E\int_0^T\bigg[ (v^0_\ell,\varphi(t)) + \langle (K f_\ell)(t),\varphi(t)\rangle \\
& \quad + \bigg(\int_0^t C^{r_\ell}(u_\ell^-(s),v_\ell^-(s))dW(s),\varphi(t)\bigg)\bigg]dt + R_\ell^1 + R_\ell^2 + R_\ell^3,
\end{split}
\end{equation}
where
\begin{align*}
R_\ell^1 &:= \E\int_0^T  \bigg\langle\int_t^{\theta_\ell^+(t)} (f_\ell(s) - Av_\ell(s) - Bu_\ell(s))ds,\varphi(t) \bigg\rangle dt,
\\
R_\ell^2  &:= \E\int_0^T \bigg(\int_0^{\tau_\ell} C(u_\ell^-(s),v_\ell^-(s))dW^{r_\ell}(s),\varphi(t)\bigg)dt,
\\
R_\ell^3  &:= \E \int_0^T \bigg(\int_t^{\theta_\ell^+(t)} C(u_\ell^-(s),v_\ell^-(s))dW^{r_\ell}(s),\varphi(t)\bigg)dt.
\end{align*}
We will now show that $R_\ell^1, \, R_\ell^2, \, R_\ell^3 \to 0$ as $\ell \to \infty$.

Because of
\begin{align*}
R_\ell^1
&= \E \sum_{j=1}^{N_\ell} \int_{t_{j-1}}^{t_j}
\bigg\langle\int_t^{t_j} (f^j - Av^j - Bu^j)ds,\varphi(t) \bigg\rangle dt
\\
&= \E \int_0^T (\theta_\ell^+(t) - t)\left\langle f_\ell(t) - Av_\ell(t) - Bu_\ell(t),\varphi(t) \right\rangle dt ,
\end{align*}
we obtain, using H\"older's inequality and
Corollary~\ref{corollary:to_apriori_est},
\begin{align*}
|R_\ell^1|  \le{} &
\tau_\ell \E\int_0^T \left|\left\langle f_\ell(t) - Av_\ell(t) - Bu_\ell(t),\varphi(t)\right\rangle \right| dt\\
 \leq{} & \tau_\ell \big(
\big(\|f_\ell\|_{L^q((0,T)\times \Omega; V_A^*)}
+ \|Av_\ell\|_{L^q((0,T)\times \Omega; V_A^*)}\big)\|\varphi\|_{L^p((0,T)\times \Omega; V_A)}\\
& + \|Bu_\ell\|_{L^2((0,T)\times \Omega; V_B^*)} \|\varphi\|_{L^2((0,T)\times \Omega; V_B)} \big) \to 0
\end{align*}
as $\ell\to\infty$.
Using H\"older's inequality and It\^o's isometry (see, e.g., Pr\'ev\^ot and R\"ockner~\cite[Section 2.3]{prevot:rockner:concise}), we find
with $u_\ell^-(t) = u^0_\ell$ and $v_\ell^-(t) = 0$ if $t\in [0,\tau_\ell)$
that
\begin{align*}
|R_\ell^2| & \leq \E\int_0^T\bigg|\int_0^{\tau_\ell} C(u_\ell^-(s),v_\ell^-(s))dW^{r_\ell}(s)\bigg||\varphi(t)|dt\\
& \leq
\left(\E \int_0^T \left| \int_0^{\tau_\ell} C(u^0_\ell,0)dW^{r_\ell}(s) \right|^2dt\right)^{1/2} \|\varphi\|_{L^2((0,T)\times \Omega; H)}\\
& =  \left(\E \int_0^T  \int_0^{\tau_\ell} |C(u^0_\ell,0)|_{l^2(H)}^2 ds dt\right)^{1/2} \|\varphi\|_{L^2((0,T)\times \Omega; H)}\\
& =  ( \tau_\ell T )^{1/2}
\left(\E |C(u^0_\ell,0)|_{l^2(H)}^2 \right)^{1/2} \|\varphi\|_{L^2((0,T)\times \Omega; H)}
 \to 0
\end{align*}
as $\ell\to\infty$.
Similarly, using also Corollary~\ref{corollary:to_apriori_est}, we see that
\begin{align*}
|R_\ell^3| & \le \left( \E \int_0^T \bigg|\int_t^{\theta_\ell^+(t)}
C(u_\ell^-(s),v_\ell^-(s))dW^{r_\ell}(s)
\bigg|^2 dt  \right)^{1/2}
\|\varphi\|_{L^2((0,T)\times \Omega; H)}
\\
& = \left( \E \int_0^T\int_t^{\theta_\ell^+(t)} \bigg|
C(u_\ell^-(s),v_\ell^-(s))\bigg|_{l^2(H)}^2 dsdt  \right)^{1/2}
\|\varphi\|_{L^2((0,T)\times \Omega; H)}
\\
& = \left( \E \int_0^T (\theta_\ell^+(t) - t ) \bigg|
C(u_\ell^-(t),v_\ell^-(t))\bigg|_{l^2(H)}^2 dt  \right)^{1/2}
\|\varphi\|_{L^2((0,T)\times \Omega; H)}
\\
& \le \tau_\ell^{1/2} \left( \E \int_0^T  \bigg|
C(u_\ell^-(t),v_\ell^-(t))\bigg|_{l^2(H)}^2 dt  \right)^{1/2}
\|\varphi\|_{L^2((0,T)\times \Omega; H)}
\to 0
\end{align*}
as $\ell\to\infty$.

We would now like to let $\ell\to \infty$ in~\eqref{eq:6}.
A simple calculation shows that $KAv_\ell \rightharpoonup Ka$ in $L^q((0,T)\times \Omega ; V_A^*)$ as $\ell \to \infty$ since $Av_\ell \rightharpoonup a$ in $L^q((0,T)\times \Omega ; V_A^*)$ as $\ell \to \infty$. Analogously, we observe that
$KBu_\ell \rightharpoonup KBu$ in $L^2((0,T)\times \Omega ; V_B^*)$ as $\ell \to \infty$ since $u_\ell \rightharpoonup u$ in $L^2((0,T)\times \Omega ; V_B)$ and thus $Bu_\ell \rightharpoonup Bu$ in $L^2((0,T)\times \Omega ; V_B^*)$ as $\ell \to \infty$ (note that $B$ is linear bounded and thus weakly-weakly continuous).

The stochastic integral is a linear bounded operator mapping $\mathcal{L}^2(l^2(H))$
into $\mathcal{L}^2(H)$.
Indeed, by It\^o's isometry (see again Pr\'ev\^ot and R\"ockner~\cite[Section~2.3]{prevot:rockner:concise}), we have for any $g\in \mathcal{L}^2(l^2(H))$
\begin{equation*}
\begin{split}
\bigg\|\int_0^\cdot g(s)dW(s)\bigg\|_{L^2((0,T)\times \Omega;H)}^2 
& = \E \int_0^T \int_0^t |g(s)|_{l^2(H)}^2\, dsdt\\ 
& \leq T\|g\|_{L^2((0,T)\times \Omega;l^2(H))}^2.	
\end{split}
\end{equation*}
Hence the stochastic integral maps weakly convergent sequences in
$\mathcal{L}^2(l^2(H))$ into weakly convergent sequences in
$\mathcal{L}^2(H)$.
With Lemma~\ref{lemma:weaklimits}, we thus obtain
\begin{equation*}
\E\int_0^T \!\! \bigg(\int_0^t C^{r_\ell}(u_{\ell}^-(s),v_{\ell}^-(s))dW(s),\varphi(t)\bigg)dt \to
\E\int_0^T \!\!\bigg(\int_0^t \bar{c}(s)dW(s),\varphi(t)\bigg)dt
\end{equation*}
as $\ell \to \infty$.

So, taking the limit in~\eqref{eq:6} as $\ell\to \infty$ and using also
$v_\ell \rightharpoonup v$ in $L^2((0,T)\times \Omega;H)$,
$v^0_\ell \to v_0$ in $L^2(\Omega;H)$ and $f_\ell \to f$ in $L^q((0,T)\times\Omega;V_A^*)$
as $\ell\to\infty$ (the latter can be shown by standard arguments), we arrive at
\begin{equation*}
\begin{split}
& \E \int_0^T \bigg[ (v(t), \varphi(t)) + \left\langle \int_0^t  a(s)ds, \varphi(t)\right\rangle + \left\langle \int_0^t Bu(s)ds, \varphi(t)\right\rangle  \bigg] dt\\
& =  \E \int_0^T\bigg[ (v_0, \varphi(t)) + \left\langle \int_0^t  f(s)ds, \varphi(t)\right\rangle
+ \left(\int_0^t  \bar{c}(s)dW(s),\varphi(t)\right)\bigg]dt ,
\end{split}
\end{equation*}
which holds for all $\varphi = \psi \bar{\varphi}$ with
$\psi \in L^p((0,T)\times \Omega; \R)$ and
$\bar{\varphi} \in V_m$. As $(V_m)_{m\in \N}$ is a Galerkin scheme for $V_A$, the above equation indeed holds for
$\varphi = \psi \bar{\varphi}$ with any  $\bar{\varphi} \in V_A \hookrightarrow V_B$.
This proves \eqref{eq:5a}.

Now we need to use $V_A\hookrightarrow V_B$.
With this assumption, we can apply the It\^o formula for the square of the norm
(see, e.g., Krylov and Rozovskii~\cite[Theorem~3.1 and
Section~2]{krylov:rozovskii:stochastic} or
Pr\'ev\^ot and R\"ockner~\cite[Theorem~4.2.5]{prevot:rockner:concise}).
Thus we conclude that $v$ has an $H$-valued continuous modification (which we label $v$ again) such that (\ref{eq:5a}) holds for all $t\in [0,T]$ and
\begin{equation*}
\begin{split}
|v(t)|^2 - |v_0|^2  ={}&  \int_0^t \big[ 2\langle f(s)-a(s)-Bu(s), v(s)\rangle + |c(s)|^2\big]ds\\
& + 2\int_0^t (v(s),c(s)dW(s)) .
\end{split}
\end{equation*}
With
\begin{align*}
\int_0^t \langle Bu(s) & , v(s)\rangle ds =
\int_0^t \langle B (u_0 + (Kv)(s)), v(s)\rangle ds
\\
&= \langle Bu_0 , (Kv)(t) \rangle + \int_0^t \int_0^s \langle Bv(\sigma) , v(s) \rangle d\sigma ds
\\
&= \langle Bu_0 , (Kv)(t) \rangle + \int_0^t \int_\sigma^t \langle Bv(\sigma) , v(s) \rangle ds d\sigma
\\
&=
\langle Bu_0 , (Kv)(t) \rangle + \langle B(Kv)(t) , (Kv)(t) \rangle
- \int_0^t \langle B v(\sigma) , (Kv)(\sigma) \rangle d\sigma
\\
& =  \langle B (u(t) + u_0) , (u(t)-u_0) \rangle
- \int_0^t \langle B u(s) , v(s) \rangle ds
\end{align*}
and thus
\begin{equation}\label{eq:Buv}
2\int_0^t \langle Bu(s), v(s)\rangle ds = |u(t)|_B^2 - |u_0|_B^2 ,
\end{equation}
we arrive at \eqref{eq:7}.

Recall that $\xi$ is the weak limit of $v_{\ell}(T)$ in $L^2(\Omega;H)$.
Using a similar limiting argument as above, we obtain 
that
\begin{equation*}
\xi + \int_0^T a(s)\, ds + \int_0^T Bu(s)\, ds 
= v_0 + \int_0^T f(s)\, ds + \int_0^T \bar{c}(s)\,dW(s) 	
\end{equation*}
with the equality holding almost surely in $H$.
This, together with the knowledge that 
$v$ has an $H$-valued continuous modification 
and with~\eqref{eq:5a}, implies that $\xi=v(T)$.
\end{proof}

%%%%%%%%%%%%%%%%%%%%%%%%%%%%%%%%%%%%%%%%%%%%%%%%
%%%%%%%%%%%%%%%%%%%%%%%%%%%%%%%%%%%%%%%%%%%%%%%%
\section{Identifying the limits in the nonlinear terms. Proof of convergence and existence}
\label{sec:identlims}

In this section, we continue the considerations of the previous section and we will use a variant of a well known monotonicity argument to identify $a$ with $Av$ and $c$ with $C(u,v)$.
This will conclude the proof of the main theorem of the paper.
We will need the following observation.
\begin{lemma}
\label{lemma:int_by_pts_cals}
Let $a$ and $b$ be real-valued integrable functions such that for all $t \in [0,T]$
\begin{equation}
\label{eq:int_by_pts_cal1}
a(t) \leq a(0) + \int_0^t b(s)ds.	
\end{equation}
Then for all $\kappa \ge 0$ and for all $t \in [0,T]$
\begin{equation}
\label{eq:int_by_pts_cal2}
e^{-\kappa t} a(t) + \kappa \int_0^t e^{-\kappa s} a(s) ds
\leq a(0) + \int_0^t e^{-\kappa s} b(s) ds.
\end{equation}
Moreover, if equality holds in~\eqref{eq:int_by_pts_cal1}
then equality also holds in~\eqref{eq:int_by_pts_cal2}.
\end{lemma}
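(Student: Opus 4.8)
The plan is to introduce the primitive $\phi(t) := a(0) + \int_0^t b(s)\,ds$, which is precisely the right-hand side of~\eqref{eq:int_by_pts_cal1}. Since $b$ is integrable, $\phi$ is absolutely continuous on $[0,T]$ with $\phi' = b$ almost everywhere, it satisfies $\phi(0) = a(0)$, and the hypothesis~\eqref{eq:int_by_pts_cal1} reads simply as the pointwise bound $a(s) \le \phi(s)$ for every $s \in [0,T]$. The strategy is then to prove an \emph{exact} identity for the smooth primitive $\phi$ and afterwards transfer it to $a$ by monotonicity.

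First I would establish the exact identity. Because $\phi$ is absolutely continuous and $s \mapsto e^{-\kappa s}$ is smooth, the product $s\mapsto e^{-\kappa s}\phi(s)$ is absolutely continuous, so the product rule holds almost everywhere and may be integrated. Differentiating and integrating from $0$ to $t$ gives
\begin{equation*}
e^{-\kappa t}\phi(t) - a(0) = \int_0^t \big( -\kappa e^{-\kappa s}\phi(s) + e^{-\kappa s}b(s) \big)\,ds,
\end{equation*}
which rearranges into
\begin{equation*}
e^{-\kappa t}\phi(t) + \kappa\int_0^t e^{-\kappa s}\phi(s)\,ds = a(0) + \int_0^t e^{-\kappa s}b(s)\,ds.
\end{equation*}
Here $\phi$ is continuous, hence bounded on $[0,T]$, so every integral above is finite and the manipulation is justified.

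The final step is to pass from $\phi$ back to $a$ using $a \le \phi$. Since $e^{-\kappa t} > 0$, we have $e^{-\kappa t}a(t) \le e^{-\kappa t}\phi(t)$; and since $\kappa \ge 0$ and $e^{-\kappa s} > 0$, monotonicity of the integral yields $\kappa\int_0^t e^{-\kappa s}a(s)\,ds \le \kappa\int_0^t e^{-\kappa s}\phi(s)\,ds$. Adding these two inequalities and inserting the identity above produces exactly~\eqref{eq:int_by_pts_cal2}.

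I do not expect a genuine obstacle: the argument is a single integration by parts combined with a monotonicity estimate, and the only point deserving a word of care is the justification of the product rule for $e^{-\kappa s}\phi(s)$, which follows from the absolute continuity of $\phi$. For the equality assertion, note that if~\eqref{eq:int_by_pts_cal1} holds with equality then $a = \phi$ identically, so both inequalities used in the last step become equalities and~\eqref{eq:int_by_pts_cal2} holds with equality as well.
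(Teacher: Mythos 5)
Your proof is correct and follows essentially the same route as the paper: the paper's one-line argument also replaces $a$ by the primitive $\phi(s)=a(0)+\int_0^s b(u)\,du$ in both terms (using $\kappa e^{-\kappa s}\ge 0$) and then identifies the resulting expression with $a(0)+\int_0^t e^{-\kappa s}b(s)\,ds$ via integration by parts, which is exactly your identity for $\phi$. Your write-up merely makes the two steps (exact identity for $\phi$, then monotonicity) explicit, and the equality case is handled the same way.
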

\begin{proof}
Using the assumption and integrating by parts, we find
\[
\begin{split}
& e^{-\kappa t}a(t) + \int_0^t \kappa e^{-\kappa s} a(s) ds
\leq e^{-\kappa t} a(0) + e^{-\kappa t} \int_0^t b(s) ds \\
& + \int_0^t \kappa e^{-\kappa s} \bigg[a(0) + \int_0^s b(u)du \bigg] ds
 = a(0) + \int_0^t e^{-\kappa s} b(s) ds.
\end{split}
\]	
This proves the assertion.
\end{proof}

\begin{proof}[Proof of Theorem~\ref{thm:main}]
Let
\begin{equation*}
\varphi_\ell(t) :=
\left\{
\begin{array}{lcl}
\displaystyle
\E ( |v_\ell(t)|^2 + |u_\ell(t)|_B^2 ) & \textrm{ if } & t \in (0,T],
\\[1ex]
\displaystyle
\E ( |v^0_\ell|^2 + |u^0_\ell|_B^2 ) & \textrm{ if } & t = 0.
\end{array}
\right.
\end{equation*}
Then from Theorem~\ref{thm:disc-apriori}, in particular~\eqref{eq:apriori1},
we find for all $t\in [0,T]$
\begin{equation*}
\varphi_\ell(t) \! \leq \varphi_\ell(0) + \E\int_0^t \!\!\big[ 2\langle f_\ell(s) - Av_\ell(s), v_\ell(s) \rangle + |C^{r_\ell}(u_\ell(s),v_\ell(s))|_{l^2(H)}^2 \big]ds + R_\ell(t),
\end{equation*}
where
\begin{equation*}
R_\ell(t) := \E \int_t^{\theta_\ell^+(t)} \big[ 2\langle f_\ell(s) - Av_\ell(s), v_\ell(s) \rangle + |C^{r_\ell}(u_\ell(s),v_\ell(s))|_{l^2(H)}^2 \big]ds.
\end{equation*}
Note that $R_\ell(0) = R_\ell(T) = 0$.
From Lemma~\ref{lemma:int_by_pts_cals}, we see that
\begin{equation}
\label{eq:int_by_parts_in_disc_eq}
\begin{split}
& e^{-\lambda T}  \varphi_\ell(T) \leq \varphi_\ell(0) - \lambda \int_0^T e^{-\lambda s} \varphi_\ell(s)ds \\
& + \E\int_0^T \!\!\!\! e^{-\lambda s}\big[ 2\langle f_\ell(s) - Av_\ell(s), v_\ell(s) \rangle + |C^{r_\ell}(u_\ell(s),v_\ell(s))|_{l^2(H)}^2 \big]ds + \bar{R}_\ell,
\end{split}
\end{equation}
where $\bar{R}_\ell := \lambda \int_0^T e^{-\lambda s} |R_\ell(s)| ds$.
We will show that $\bar{R}_\ell \to 0$ as $\ell \to \infty$.
Indeed,
\begin{equation*}
\begin{split}
& \bar{R}_\ell
 \leq \lambda \E \int_0^T  \int_t^{\theta_\ell^+(t)} \big| 2\langle f_\ell(s) - Av_\ell(s), v_\ell(s) \rangle + |C(u_\ell(s),v_\ell(s))|_{l^2(H)}^2 \big| ds dt\\
& \leq c \tau_\ell \E \int_0^T \big[ 2\left(\|f_\ell(t)\|_{V_A^*} + \|Av_\ell(t)\|_{V_A^*}\right)\|v_\ell(t)\|_{V_A} + |C(u_\ell(t), v_\ell(t))|_{l^2(H)}^2 \big] dt\\
& \leq c \tau_\ell,
\end{split}
\end{equation*}
since the integrand is piecewise constant in time and since we can apply
Young's inequality and Corollary~\ref{corollary:to_apriori_est}.

Now we are ready to apply the monotonicity-like assumption~\eqref{eq:mod_monotonicity}.
Let $w \in \mathcal{L}^p(V_A)$ and let $z \in \mathcal{L}^2(V_B)$.
We see that
\begin{equation*}
\begin{split}
& \E\int_0^T  e^{-\lambda s} \langle Av_\ell(s), v_\ell(s) \rangle ds
= \E \int_0^T  e^{-\lambda s} \langle Av_\ell(s) - Aw(s), v_\ell(s) - w(s) \rangle ds\\
& \, +  \E\int_0^T e^{-\lambda s} [ \langle Aw(s), v_\ell(s) - w(s)\rangle +  \langle Av_\ell(s),w(s)\rangle ] ds\\
& \geq  \frac{1}{2}\E \int_0^T \!\! e^{-\lambda s} \big[|C(u_\ell(s),v_\ell(s)) - C(z(s),w(s))|_{l^2(H)}^2 \\
& \, - \lambda|v_\ell(s)-w(s)|^2 - \lambda |u_\ell(s) - z(s)|_B^2\big]ds\\
& \, +  \E\int_0^T e^{-\lambda s} [ \langle Aw(s), v_\ell(s) - w(s)\rangle +  \langle Av_\ell(s),w(s)\rangle ] ds.
\end{split}
\end{equation*}
Then from~\eqref{eq:int_by_parts_in_disc_eq}, we can deduce that
\begin{equation}
\label{eq:mono_trick_0}
\begin{split}
& e^{-\lambda T}  \E\big(|v_\ell(T)|^2 + |u_\ell(T)|_B^2\big)\\
& \leq \E\big(|v^0_\ell|^2 + |u^0_\ell|_B^2\big) - \lambda \int_0^T e^{-\lambda s} \E\big(|v_\ell(s)|^2 + |u_\ell(s)|_B^2\big)ds \\
& \, + \E\int_0^T \!\!\! e^{-\lambda s}\big[ 2\langle f_\ell(s) - Av_\ell(s), v_\ell(s) \rangle + |C(u_\ell(s),v_\ell(s))|_{l^2(H)}^2 \big]ds + \bar{R}_\ell
\\
& \leq   \E\big(|v^0_\ell|^2 + |u^0_\ell|_B^2\big)  + 2\E\int_0^T e^{-\lambda s}\langle f_\ell(s),v_\ell(s) \rangle ds \\
& \, + \E\int_0^T e^{-\lambda s}\big[ 2 \big(C(u_\ell(s),v_\ell(s)), C(z(s),w(s)) \big)_{l^2(H)} \\
& \, - |C(z(s),w(s))|_{l^2(H)}^2 
- 2\lambda (v_\ell(s), w(s)) + \lambda|w(s)|^2 \\
& \, - 2\lambda (u_\ell(s),z(s))_B 
+ \lambda|z(s)|_B^2 \big]ds \\
& \, - \E\int_0^T 2e^{-\lambda s} \big[ \langle Aw(s), v_\ell(s) - w(s)\rangle +  \langle Av_\ell(s),w(s)\rangle \big] ds + \bar{R}_\ell.
\end{split}
\end{equation}
We can now take the limit inferior along the subsequence $\ell'$.
Due to  Lemma~\ref{lemma:weaklimits}
and due to the weak sequential lower-semicontinuity of the norm, we see that
\begin{equation}
\label{eq:mono_trick_1}
\begin{split}
& e^{-\lambda T} \E\big(|v(T)|^2 + |u(T)|_B^2\big)
\leq \liminf_{\ell' \to \infty} e^{-\lambda T}
\E\big(|v_{\ell'}(T)|^2 + |u_{\ell'}(T)|_B^2\big)
\\
& \leq \E\big(|v_0|^2 + |u_0|_B^2\big) + 2\E\int_0^T \!\!e^{-\lambda s} \langle f(s),v(s) \rangle ds \\
& \quad + \E\int_0^T e^{-\lambda s}\big[ 2 \big(\bar{c}(s), C(z(s),w(s)) \big)_{l^2(H)} - |C(z(s),w(s))|_{l^2(H)}^2 \\
&    \quad - 2\lambda (v(s), w(s)) + \lambda|w(s)|^2 - 2\lambda (u(s),z(s))_B + \lambda|z(s)|_B^2 \big]ds \\
& \quad - \E\int_0^T 2e^{-\lambda s} \big[ \langle Aw(s), v(s) - w(s)\rangle +  \langle a(s),w(s)\rangle \big] ds.
\end{split}
\end{equation}
We now need the limit equation obtained in Lemma~\ref{lemma:eqns}
to proceed.
Taking expectation in~\eqref{eq:7} and
using Lemma~\ref{lemma:int_by_pts_cals}, we get
\begin{equation}
\label{eq:mono_trick_2}
\begin{split}
 e^{-\lambda T}\E \big(|v(T)|^2 + & |u(T)|_B^2 \big)  =  \E\big(|v_0|^2 + |u_0|_B^2\big)\\
& - \lambda \E \int_0^T e^{-\lambda s} \big[|v(s)|^2 + |u(s)|_B^2\big] ds\\
& + \E\int_0^T e^{-\lambda s} \big[ 2\langle f(s) - a(s),v(s)\rangle +
|\bar{c}(s)|_{l^2(H)}^2\big] ds.
\end{split}
\end{equation}
Subtracting~\eqref{eq:mono_trick_2} from~\eqref{eq:mono_trick_1} leads to
\begin{equation}
\label{eq:mono_trick_3}
\begin{split}
0
\leq{} & \liminf_{\ell' \to \infty} e^{-\lambda T}
\E\big(|v_{\ell'}(T)|^2 + |u_{\ell'}(T)|_B^2\big)
- e^{-\lambda T} \E\big(|v(T)|^2 + |u(T)|_B^2\big) \\
\leq{} & \, \E \int_0^T e^{-\lambda s} \big[ -|\bar{c}(s) - C(z(s),w(s))|_{l^2(H)}^2 \\
& + \lambda |v(s)-w(s)|^2 + \lambda |u(s)-z(s)|_B^2 + 2\langle a(s), v(s) - w(s)\rangle \big] ds\\
&- 2\E\int_0^T e^{-\lambda s} \langle Aw(s), v(s)-w(s)\rangle ds .
\end{split}
\end{equation}
This implies
\begin{equation}
\label{eq:mono_trick_4}
\begin{split}
& 2\E\int_0^T  e^{-\lambda s} \langle Aw(s), v(s)-w(s)\rangle ds  \\
 & \le  \E \int_0^T e^{-\lambda s} \big[ -|\bar{c}(s) - C(z(s),w(s))|_{l^2(H)}^2 \\
&\quad + \lambda |v(s)-w(s)|^2 + \lambda |u(s)-z(s)|_B^2 + 2\langle a(s), v(s) - w(s)\rangle \big] ds\\
& \leq  \E \int_0^T e^{-\lambda s} \big[
\lambda |v(s)-w(s)|^2 + \lambda |u(s)-z(s)|_B^2 + 2\langle a(s), v(s) - w(s)\rangle \big] ds
\end{split}
\end{equation}
Now we are ready to identify the limits.
First we take $w = v$ and $z = u$.
The first inequality in~\eqref{eq:mono_trick_4} leads to
\begin{equation*}
0 \leq -\E \int_0^T e^{-\lambda s} |\bar{c}(s) - C(u(s),v(s))|_{l^2(H)}^2 ds
\end{equation*}
which can only be true if $\bar{c}=C(u,v)$.
Next we take an arbitrary $\bar{w} \in \mathcal{L}^p(V)$, set $\bar{z} = u_0 + K\bar{w}$ and let $\epsilon \in (0,1)$.
Then with $w = v - \epsilon \bar{w}$ and $z = u - \epsilon \bar{z}$,
the second inequality in~\eqref{eq:mono_trick_4} leads to
\begin{equation*}
\begin{split}
&2\E\int_0^T e^{-\lambda s} \langle A(v(s)-\epsilon \bar{w}(s)), \epsilon \bar{w}(s)\rangle ds \\
& \leq  \E \int_0^T e^{-\lambda s} \big[ \lambda \epsilon^2 (|\bar{w}(s)|^2 + |\bar{z}(s)|_B^2) + 2\langle a(s), \epsilon \bar{w}(s)\rangle \big] ds.\\
\end{split}
\end{equation*}
We divide by $\epsilon > 0$.
Due to the hemicontinuity and growth assumptions on $A$
and since $\epsilon < 1$, we can apply
Lebesgue's theorem on dominated convergence and let $\epsilon \to 0$.
Hence, we arrive at
\begin{equation*}
\E\int_0^T e^{-\lambda s} \langle Av(s), \bar{w}(s)\rangle ds \leq \E \int_0^T e^{-\lambda s} \langle a(s), \bar{w}(s)\rangle  ds ,
\end{equation*}
which can only hold true for all $\bar{w} \in \mathcal{L}^p(V)$ if $a=Av$.
Finally,  we note that the uniqueness of the solution to equation~\eqref{eq:1} implies that the whole sequence converges to the limit and not only the subsequence.

We will now show that $v_\ell(T) \to v(T)$ in $L^2(\Omega;H)$
and $u_\ell(T) \to u(T)$ in $L^2(\Omega;V_B)$ as $\ell\to\infty$.
We first take the limit superior in~\eqref{eq:mono_trick_0} 
with $w=v$ and $z=u$ to obtain
\begin{equation*}
\begin{split}
\limsup_{\ell\to \infty} e^{-\lambda T} & \E \big( |v_\ell(T)|^2 + |u_\ell(T)|_B^2 \big) \\
& \leq \E\big(|v_0|^2 + |u_0|_B^2\big) + \E\int_0^T \!\!e^{-\lambda s} \Big[ 2\langle f(s)-a(s),v(s) \rangle  \\
& \quad + 2 \big(\bar{c}(s), C(u(s),v(s)) \big)_{l^2(H)} - |C(u(s),v(s))|_{l^2(H)}^2 \\
&    \quad - 2\lambda (v(s), v(s)) + \lambda|v(s)|^2 - 2\lambda (u(s),u(s))_B + \lambda|u(s)|_B^2 \Big]\,ds. \\
\end{split}
\end{equation*}
Since $a=Av$ and $\bar{c}=C(u,v)$  and due to~\eqref{eq:mono_trick_2}, we get
\begin{equation}
\label{eq:endptconv}
\begin{split}
\limsup_{\ell\to \infty} e^{-\lambda T} & \E \big( |v_\ell(T)|^2 + |u_\ell(T)|_B^2 \big)\\
& \leq \E\big(|v_0|^2 + |u_0|_B^2\big) + \E\int_0^T \!\! e^{-\lambda s} 
\Big[2\langle f(s)-Av(s),v(s) \rangle  \\
& \quad +|C(u(s),v(s))|_{l^2(H)}^2 
- \lambda|v(s)|^2 - \lambda|u(s)|_B^2  \Big]\, ds \\
& = e^{-\lambda T}\E \big( |v(T)|^2 + |u(T)|_B^2 \big). 
\end{split}
\end{equation}
Finally, due to weak sequential lower-semicontinuity of the norm 
and with~\eqref{eq:endptconv}, we see that
\[
\begin{split}
e^{-\lambda T}\E \big( |v(T)|^2 + |u(T)|_B^2 \big) 
& \leq \liminf_{\ell\to \infty} e^{-\lambda T}  \E \big( |v_\ell(T)|^2 + |u_\ell(T)|_B^2 \big) \\	
& \leq \limsup_{\ell\to \infty} e^{-\lambda T}  \E \big( |v_\ell(T)|^2 + |u_\ell(T)|_B^2 \big)\\
& \leq e^{-\lambda T}\E \big( |v(T)|^2 + |u(T)|_B^2 \big).
\end{split}
\]
Hence $\E \big( |v_\ell(T)|^2 + |u_\ell(T)|_B^2 \big) \to \E\big(|v(T)|^2 + |u(T)|_B^2\big)$ as $\ell \to \infty$.
The space $L^2(\Omega; (H,V_B))$ with the natural inner product is a Hilbert
space.
This is because the space $V_B$, under the conditions imposed on $B$, is 
a Hilbert space.
We can now use this together with the weak convergence
$v_\ell(T) \rightharpoonup v(T)$ in $L^2(\Omega;H)$
and $u_\ell(T) \rightharpoonup u(T)$ in $L^2(\Omega;V_B)$
to complete the proof.
\end{proof}

\begin{remark}
\label{remark:stronger_monotonicity}
It is possible to show that if $A$ and $C$ jointly satisfy
some appropriate stronger monotonicity assumption
then $v_\ell \to v$ in $L^p((0,T)\times \Omega;V_A)$ as $\ell \to \infty$.
For example, if there is $\mu > 0$ such that, almost surely, for any $w,z \in V_A$ and $u,v \in V_B$
\begin{equation}
\label{eq:stronger_monotonicity}
\begin{split}
&\langle Aw - Az, w-z \rangle  +  \lambda_A |w-z|^2 \\
& \geq \mu\|w-z\|_{V_A}^p +  \frac{1}{2}|C(u,w) - C(v,z)|_{l^2(H)}^2 - \lambda_B|u-v|_B^2
\end{split}
\end{equation}
then $v_\ell \to v$ in $L^p((0,T)\times \Omega;V_A)$ as $\ell \to \infty$.
\end{remark}

Indeed with~\eqref{eq:stronger_monotonicity}, we obtain, instead of~\eqref{eq:mono_trick_0}, the following (we have taken $w=v$ and $z=u$):
\begin{equation*}
\begin{split}
& \mu \E\int_0^T e^{-\lambda s} \|v_\ell(s) - v(s)\|_{V_A}^p ds
+ e^{-\lambda T} \E\big(|v_\ell(T)|^2 + |u_\ell(T)|_B^2\big)\\
& \leq \E\big(|v^0_\ell|^2 + |u^0_\ell|_B^2\big)
+ \E\int_0^T e^{-\lambda s}\big[ 2\langle f_\ell(s),v_\ell(s) \rangle ds \\
& \, + \E\int_0^T e^{-\lambda s}\big[ 2 \big(C^{r_\ell}(u_\ell(s),v_\ell(s)), C^{r_\ell}(u(s),v(s)) \big)_{l^2(H)}\\
& \, - |C^{r_\ell}(u(s),v(s))|_{l^2(H)}^2 
- 2\lambda (v_\ell(s), v(s)) + \lambda|v(s)|^2 - 2\lambda (u_\ell(s),u(s))_B\\
& \, + \lambda|u(s)|_B^2 \big]ds 
- \E\int_0^T 2e^{-\lambda s} \big[ \langle Av(s), v_\ell(s) - v(s)\rangle +  \langle Av_\ell(s),v(s)\rangle \big] ds + \bar{R}_\ell.
\end{split}
\end{equation*}
Taking the limit as $\ell \to \infty$ and using Lemma~\ref{lemma:weaklimits} together with the fact, established earlier, that $a=Av$ and $c=C(u,v)$, we obtain
\begin{equation*}
\begin{split}
& \mu \lim_{\ell \to \infty} \E\int_0^T  e^{-\lambda s} \|v_\ell(s) - v(s)\|_{V_A}^p ds + e^{-\lambda T}  \E\big(|v(T)|^2 + |u(T)|_B^2\big)\\
& \leq \E\big(|v_0|^2 + |u_0|_B^2\big) -\lambda \E\int_0^T e^{-\lambda s} \big[|v(s)|^2 + |u(s)|_B^2\big] ds\\
& \quad + \E\int_0^T e^{-\lambda s}\big[ 2\langle f(s) - Av(s),v(s) \rangle + |C(u(s),v(s))|_{l^2(H)}^2\big] ds.
\end{split}
\end{equation*}
If we subtract~\eqref{eq:mono_trick_2} then we obtain
\begin{equation*}
\mu \lim_{\ell \to \infty} \E\int_0^T  e^{-\lambda s} \|v_\ell(s) - v(s)\|_{V_A}^p ds \leq 0.
\end{equation*}
From this, we conclude that $v_\ell \to v$ in
$L^p((0,T)\times \Omega;V_A)$ and thus also $u_\ell \to u$ in $L^2((0,T)\times \Omega;V_B)$
as $\ell \to \infty$.

%%%%%%%%%%%%%%%%%%%%%%%%%%%%%%%%%%%%%%%%%%%%%%%%
%%%%%%%%%%%%%%%%%%%%%%%%%%%%%%%%%%%%%%%%%%%%%%%%
\section{Proof of uniqueness}
\label{sec:uniq}
In this short section, we will prove that the solution to~\eqref{eq:1}
is unique in the sense specified in Theorem~\ref{lemma:uniq}.
\begin{proof}[Proof of Theorem~\ref{lemma:uniq}]
Let $v := v_1 - v_2$ and $u := u_1 - u_2$.
Then $\P$-almost everywhere and for all $t\in [0,T]$
\begin{equation*}
\begin{split}
v(t) ={}&   - \int_0^t \big[Av_1(s) - Av_2(s) + Bu(s) \big] ds \\
& + \int_0^t \left[C(u_1(s),v_1(s)) - C(u_2(s),v_2(s))\right] dW(s)  	
\end{split}
\end{equation*}
holds in $V_A^*$.
With the assumption $V_A \hookrightarrow V_B$, we may apply It\^o's formula for the square of the norm (see, e.g., Pr\'ev\^ot and R\"ockner~\cite[Theorem 4.2.5]{prevot:rockner:concise}) and obtain
\begin{equation*}
\begin{split}
|v(t)|^2 ={} & -2\int_0^t  \langle Av_1(s) - Av_2(s)+Bu(s),v(s) \rangle ds \\
& + 2\int_0^t (v(s), C(u_1(s),v_1(s)) - C(u_2(s), v_2(s)) dW(s) )\\
& + \int_0^t |C(u_1(s),v_1(s)) - C(u_2(s), v_2(s))|_{l^2(H)}^2 ds.
\end{split}
\end{equation*}
Since $u(0) = 0$, we obtain with (\ref{eq:Buv})
\begin{equation*}
\begin{split}
|v(t)|^2 + |u(t)|_B^2 ={}& -2\int_0^t  \langle Av_1(s) - Av_2(s),v(s) \rangle ds \\
& + 2\int_0^t \big(v(s), [C(u_1(s),v_1(s)) - C(u_2(s), v_2(s))] dW(s) \big) \\
& + \int_0^t |C(u_1(s),v_1(s)) - C(u_2(s), v_2(s))|_{l^2(H)}^2 ds.
\end{split}
\end{equation*}
Now we apply It\^o's formula for real-valued processes (similar to Lemma~\ref{lemma:int_by_pts_cals}) to obtain
\begin{equation*}
\begin{split}
e^{-\lambda t}\big(|v(t)|^2 + & |u(t)|_B^2\big) = - \lambda \int_0^t e^{-\lambda s} \big(|v(s)|^2 + |u(s)|_B^2 \big)ds\\
& - 2\int_0^t e^{-\lambda s} \langle Av_1(s) - Av_2(s),v(s) \rangle ds \\
& + \int_0^t e^{-\lambda s} |C(u_1(s),v_1(s)) - C(u_2(s), v_2(s))|_{l^2(H)}^2 ds + m(t),
\end{split}
\end{equation*}
where
\begin{equation*}
m(t) = 2\int_0^t e^{-\lambda s} \big(v(s), [C(u_1(s),v_1(s)) - C(u_2(s), v_2(s))] dW(s)\big).
\end{equation*}
This together with (\ref{eq:mod_monotonicity}) yields
\begin{equation*}
  0 \leq e^{-\lambda t}\big(|v(t)|^2  + |u(t)|_B^2\big) \leq m(t).
\end{equation*}
Hence the process $m(t)$ is non-negative for all $t\in [0,T]$.
We also can see that it is a continuous
local martingale starting from $0$.
Thus, almost surely, $m(t) = 0$ for all $t\in [0,T]$.
But this in turn means that, almost surely, $|v_1(t) - v_2(t)|^2 = |v(t)|^2 = 0$ as well as $|u_1(t) - u_2(t)|_B^2 = |u(t)|_B^2 = 0$ for all $t\in [0,T]$.
Thus solutions  to~\eqref{eq:1} must be indistinguishable.
\end{proof}

\section*{Acknowledgements}
The authors would like to thank Raphael Kruse (Berlin) for helpful  discussions and comments 
and to the referees for their careful reading and helpful suggestions.

\end{document}